\newtheorem{theorem}{Theorem}
\newtheorem{lemma}[theorem]{Lemma}
\newtheorem{definition}{Definition}
\newtheorem{example}[theorem]{Example}
\begin{document}
\thispagestyle{empty}
\setcounter{page}{1}

\noindent
{\footnotesize {\rm Submitted to\\[-1.00mm]
{\em Dynamics of Continuous, Discrete and Impulsive Systems}}\\[-1.00mm]
http:monotone.uwaterloo.ca/$\sim$journal} $~$ \\ [.3in]


\begin{center}
{\large\bf ON TWO NONLINEAR DIFFERENCE EQUATIONS}

\vskip.20in

Jerico B. Bacani$^{1}$\ and\ Julius Fergy T. Rabago$^{2}$ \\[2mm]
{\footnotesize
Department of Mathematics and Computer Science\\
College of Science\\ 
University of the Philippines Baguio\\ 
Baguio City 2600, Benguet, PHILIPPINES\\[5pt]

E-mail: $^1$jicderivative@yahoo.com, $^2$jfrabago@gmail.com\\
}
\end{center}

{\footnotesize
\noindent
{\bf Abstract.}  The behavior of solutions of the following nonlinear difference equations
\[
x_{n+1}=\displaystyle\frac{q}{p+x_n^{\nu}}
\quad 
\text{and}
\quad y_{n+1}=\displaystyle\frac{q}{-p+y_n^{\nu}},
\]
where $p, q \in\mathbb{R}^+$ and $\nu\in \mathbb{N}$ are studied. 
The solution form of these two equations when $\nu =1$ are expressed in terms of Horadam numbers.
Meanwhile, the behavior of their solutions are investigated for all integer $\nu > 0$ and several numerical examples are presented to illustrate the results exhibited.
The present work generalizes those seen in [{\it Adv. Differ. Equ.}, {\bf 2013}:174 (2013), 7 pages].  \\[3pt]
{\bf Keywords.} Riccati difference equations, Horadam sequence, fixed solutions, boundedness, prime period two solution, oscillatory solution. \\[3pt]
{\small\bf AMS (MOS) subject classification:} Primary: 39 A 10; Secondary: 11 B 39}

\vskip.2in


\section{Introduction}
An equation of the form 
\begin{equation}\label{form}
x_{n+1} = f(x_n, x_{n-1}, \ldots, x_{n-k}),\quad n=0,1,\ldots
\end{equation}
where $f$ is a continuous function which maps some set $I^{k+1}$ into $I$ is called a \emph{difference equation of order} $k+1.$ 
The set $I$ is usually a sub-interval of the set of real numbers $\mathbb{R}$, a union of its sub-intervals, and may even be a discrete subset of $\mathbb{R}$ such as the set of \emph{integers} $\mathbb{Z}.$ 
A solution of \eqref{form}, uniquely determined by a prescribed set of $(k+1)$ \emph{initial conditions}  $x_{-k},x_{-k+1},\ldots,x_0 \in I$, is a sequence $\{x_n\}_{n=-k}^{\infty}$ which satisfies equation \eqref{form} for all $n\geq0.$ 
If for some least value $m\geq -k$, an initial point $(x_{-k},x_{-k+1},\ldots,x_0) \in I^{k+1}$ generates a solution $\{x_n\}$ with undefined value $x_m$, then we call the set $S$ of all such points the \emph{singularity set}, also called the \lq{}\lq{}forbidden set\rq{}\rq{} in the literature \cite{grove, kulenovic}.  
On the other hand, A solution of equation \eqref{form} which is constant for all $n\geq-k$ is called an \emph{equilibium solution} of \eqref{form}. 
If $x_n=\bar{x}$ for all $n\geq-k$ is an equilibrium solution of \eqref{form}, then $\bar{x}$ is called an \emph{equilibrium point}, or simply an \emph{equilibrium}, of \eqref{form}.
Difference equations are of great importance not only in the field of pure mathematics but also in the study and development of applied sciences. 
They appear naturally as discrete analogues and as numerical solutions of differential and delay differential equations which model various diverse phenomena in biology, ecology, physiology, physics, engineering, economics, etc. 
Recently, there has been an increasing interest in the study of qualitative analysis of rational difference equations and systems of difference equations. 
In fact, many research articles have been published previously in various mathematical journals devoted entirely in the investigation of these types of equations. 
Interestingly, these types of equations appear to have very simple forms, but, as it was seen in many literature, the behavior of their solutions are quite difficult to understand. 
So, difference equations are usually tackled by investigating the global character, boundedness, attractivity, oscillations and periodicity of their solutions. 
In an earlier paper \cite{Tollu}, Tollu et al. studied the form and behavior of solutions of the following difference equations  
\[
x_ {n+1} = \frac{1}{1+x_n}
\quad \text{and} \quad  
y_{n+1} = \displaystyle\frac{1}{-1+y_n}.
\]
Interestingly, it was shown in \cite{Tollu} that the solution form of the above equations are expressible in terms of Fibonacci numbers.
We mention that these two equations are, in fact, two special cases of the following Riccati difference equation:
\[
x_n=\frac{a+bx_n}{c+dx_n},\quad  n=0,1,\ldots,
\]    
which has been investigated recently by some authors, see, for instance, Brand \cite{Brand} and Papaschinopoulos and Papadopoulos \cite{Papaschinopoulos}. 
It is worth mentioning that the solution form has been found completely by Brand in \cite{Brand}.

Motivated by these aforementioned works, we investigate the form and behavior of solutions of the two nonlinear difference equations
\begin{equation}
\label{problem1}
x_{n+1}=\frac{q}{p+x_n^{\nu}},\quad  n=0,1,\ldots,
\end{equation} 
and
\begin{equation}\label{problem2}
y_{n+1}=\frac{q}{-p+y_n^{\nu}},\quad  n=0,1,\ldots,
\end{equation} 
where $p, q \in\mathbb{R}^+$ and $\nu\in \mathbb{N}$.
Particularly, we derive the form of solutions of the above equations in terms of Horadam numbers when $\nu = 1$ and investigate the long term dynamics of their solutions.
We also give conditions on the stability and instability of the equilibrium points of the above equations in terms of the parameters $p, q$ and $\nu$.
Furthermore, we provide numerical examples in confirming the results presented in the paper.

The paper is structured as follows: in Section 2, we discuss the well-known generalization of Fibonacci numbers called the \emph{Horadam sequence} and present some of its properties which will be useful in our investigation. 
In Section 3, we present the solution form of equations \eqref{problem1} and \eqref{problem2} and investigate their behaviors in terms of the relations between the parameter $p$ and $q$ for $\nu=1$.
Each results exhibited are illustrated through numerical examples. 
In Section 4, we give some results on the behavior of solutions of the two equations in consideration for the case $\nu > 1$ 
and accompany them with several numerical illustrations.  
Finally, a short summary and a statement of future work is given in Section 5.

\section{The Horadam Sequence}

In 1965, Horadam \cite{horadam} offered a generalization of Fibonacci sequence, that is, he defined a second-order linear recurrence sequence $\{W_n(a,b;p,q)\},$ or simply $\{W_n\},$ as follows:
$W_0=a$, $W_1=b$ and $W_{n+1}=pW_n + qW_{n-1}$ for all $n \geq 2$, where $a, b, p$ and $q$ are arbitrary real numbers. 
The Binet\rq{}s formula for this recurrence sequence can easily be obtained and is given by 
$
W_n=(A\Phi_+^n-B\Phi_-^n)/(\Phi_+ -\Phi_-)
$
where $A=b-a\Phi_-$ and $B=b-a\Phi_+.$ Here, $\Phi_{\pm}$ is simply the roots of the quadratic equation $x^2=px+q$, 
i.e., $\Phi_{\pm}= (p\pm\sqrt{p^2+4q})/2$.
Obviously, $\Phi_+ + \Phi_-=p$, $\Phi_+-\Phi_-=\sqrt{p^2+4q}$ and $\Phi_+\Phi_-=-q$. 
The sequence $\{W_n(0,1;p,q)\}$ can also be extended into negative indices with the recurrence relation
$
W_{-n}=-pW_{-n+1} + qW_{-n+2}.
$
That is, $W_{-n}=(-1)^{n+1}W_n$. 
It is worth mentioning that, for some specific values of $a, b, p$ and $q$, we\rq{}ll recover some well-known sequences other than Fibonacci sequence such as: $W_n(0,1;2,1)=P_n,$ the $n^{th}$ Pell number, and
$W_n(0,1;1,2)=J_n,$ the $n^{th}$ Jacobsthal number.
We mention the following properties of Horadam numbers which will be useful to our investigation.

\begin{lemma}\label{lemma1}
Let $W_0=0$ and $W_1=1.$ Then, we have the following identities:
\begin{enumerate}
\item[\rm (i)] For $n>k+1, n\in\mathbb{N}$ and $k\in\mathbb{N}\cup\{0\}$, $W_{n}=W_{k+1}W_{n-k}+qW_kW_{n-(k+1)}$.
\item[\rm (ii)] For $n>0, \;\;\Phi_{\pm}^n=\Phi_{\pm} W_n+W_{n-1}.$ 
\item[\rm (iii)] {\bf Cassini\rq{}s Formula.} For $n>0, \;\;W_{n-1}W_{n+1}-W_n^2=-(-q)^{n-1}$.
\item[\rm (iv)] {\bf d\rq{}Ocagne\rq{}s Identity.} For all $n,r \in\mathbb{N},$ we have
\[W_{n+r}W_{n+1}-W_{n+r+1}W_n = (-1)^nq^nW_r.\]
\item[\rm (v)] {\bf Johnson\rq{}s identity.}  For any integers $k$, $l$, $m$, $n$ and $r$ such that $k+l=m+n$, 
\[
W_kW_l-W_mW_n=(-q)^r(W_{k-r}W_{l-r}-W_{m-r}W_{n-r}).
\]
\end{enumerate}
\end{lemma}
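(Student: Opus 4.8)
\emph{Plan.} I would split the five identities into two families. Parts (i) and (ii) are recurrence-flavoured, and I would prove them by induction directly from the defining relation $W_{n+1}=pW_n+qW_{n-1}$ together with the characteristic identity $\Phi_{\pm}^2=p\Phi_{\pm}+q$ (valid since $\Phi_{\pm}$ solve $x^2=px+q$). Parts (iii)--(v) are product identities, and here the economical route is to recognise Johnson's identity (v) as the master statement and to deduce Cassini (iii) and d'Ocagne (iv) from it by specialising the indices. I would prove (v) itself from the Binet representation $W_n=(\Phi_+^n-\Phi_-^n)/(\Phi_+-\Phi_-)$, which arises from setting $a=0,\ b=1$ and yields the relations $\Phi_++\Phi_-=p$, $(\Phi_+-\Phi_-)^2=p^2+4q$, and $\Phi_+\Phi_-=-q$ that will do all the work.

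For (i) I would fix $n$ and induct on $k$. The cases $k=0$ and $k=1$ reduce, respectively, to the tautology $W_n=W_n$ and to the defining recurrence. For the inductive step I would start from $W_{k+2}W_{n-k-1}+qW_{k+1}W_{n-k-2}$, expand $W_{k+2}=pW_{k+1}+qW_k$, regroup the $W_{k+1}$ terms as $W_{k+1}(pW_{n-k-1}+qW_{n-k-2})=W_{k+1}W_{n-k}$, and thereby recover $W_{k+1}W_{n-k}+qW_kW_{n-k-1}$, which equals $W_n$ by the induction hypothesis. For (ii) I would induct on $n$: the base $n=1$ reads $\Phi_{\pm}=\Phi_{\pm}W_1+W_0$, and the step follows by writing $\Phi_{\pm}^{n+1}=\Phi_{\pm}\cdot\Phi_{\pm}^n$, replacing $\Phi_{\pm}^2$ by $p\Phi_{\pm}+q$, and collecting the coefficient of $\Phi_{\pm}$ through the recurrence for $W_{n+1}$.

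For (v) I would substitute Binet into $W_kW_l-W_mW_n$. Expanding the two products produces the four ``like-root'' monomials $\Phi_+^{k+l},\ \Phi_-^{k+l},\ \Phi_+^{m+n},\ \Phi_-^{m+n}$, which cancel precisely because $k+l=m+n$, leaving only mixed monomials $\Phi_+^{\,i}\Phi_-^{\,j}$ with $i+j=k+l$. Carrying out the same expansion for $W_{k-r}W_{l-r}-W_{m-r}W_{n-r}$ and multiplying by $(-q)^r=(\Phi_+\Phi_-)^r$ restores each surviving monomial to its unshifted exponent, so the two sides agree termwise; because $\Phi_+\Phi_-=-q\neq0$ when $q>0$, the negative powers $\Phi_{\pm}^{-j}$ are well defined and this computation remains valid for negative indices, consistently with $W_{-n}=(-1)^{n+1}W_n$. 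I would then read off (iii) by applying (v) with $(k,l,m,n)\mapsto(n-1,\,n+1,\,n,\,n)$ and $r=n-1$, whose right-hand side is $(-q)^{n-1}(W_0W_2-W_1^2)=-(-q)^{n-1}$, and (iv) by applying (v) with $(k,l,m,n)\mapsto(n+r,\,n+1,\,n+r+1,\,n)$ and shift $n$, whose right-hand side is $(-q)^n(W_rW_1-W_{r+1}W_0)=(-1)^nq^nW_r$.

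The step I expect to be the main obstacle is the bookkeeping in (v): one must check that exactly the like-root terms cancel under $k+l=m+n$ and that the factor $(-q)^r$ aligns the shifted and unshifted mixed monomials term by term, rather than merely in aggregate. Once (v) is established, (iii) and (iv) are one-line substitutions, and the inductions for (i) and (ii) are routine.
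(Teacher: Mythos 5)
Your treatment of (i), (iii), (iv) and (v) is correct, and it is necessarily a different route from the paper's: the paper gives no proof of Lemma \ref{lemma1} at all, remarking only that (i) is an easy induction on $n$ and deferring (i)--(iv) to the references \cite{Rabago2} and \cite{Rabago1}, with (v) left entirely unattributed. Your plan of proving Johnson's identity (v) from the Binet form and then reading off Cassini (iii) via $(k,l,m,n)=(n-1,n+1,n,n)$ with $r=n-1$, and d'Ocagne (iv) via $(k,l,m,n)=(n+r,n+1,n+r+1,n)$ with shift $n$, checks out exactly, and your induction for (i) closes as described. One caveat: your parenthetical claim that the computation for (v) extends to negative indices ``consistently with $W_{-n}=(-1)^{n+1}W_n$'' is off. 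Extending by Binet (equivalently, running the recurrence backwards) gives $W_{-n}=(-1)^{n+1}W_n/q^n$, which agrees with the paper's convention only when $q=1$; for general $q$ the two conventions diverge, so (v) at negative indices depends on which extension one adopts.

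The genuine gap is (ii). The induction you call routine does not close, because the identity as printed is false unless $q=1$. From the inductive hypothesis $\Phi_{\pm}^n=\Phi_{\pm}W_n+W_{n-1}$ and $\Phi_{\pm}^2=p\Phi_{\pm}+q$ you get
\[
\Phi_{\pm}^{n+1}=\Phi_{\pm}^2W_n+\Phi_{\pm}W_{n-1}=\Phi_{\pm}\bigl(pW_n+W_{n-1}\bigr)+qW_n,
\]
whereas the target $\Phi_{\pm}W_{n+1}+W_n$ equals $\Phi_{\pm}\bigl(pW_n+qW_{n-1}\bigr)+W_n$; the two differ by $(1-q)\bigl(\Phi_{\pm}W_{n-1}-W_n\bigr)$, which does not vanish in general. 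Indeed, already at $n=2$ the stated identity reads $\Phi_{\pm}^2=p\Phi_{\pm}+1$, contradicting $\Phi_{\pm}^2=p\Phi_{\pm}+q$ when $q\neq1$. The correct identity is $\Phi_{\pm}^n=\Phi_{\pm}W_n+qW_{n-1}$, for which your inductive scheme works verbatim: $\Phi_{\pm}^{n+1}=(p\Phi_{\pm}+q)W_n+q\Phi_{\pm}W_{n-1}=\Phi_{\pm}\bigl(pW_n+qW_{n-1}\bigr)+qW_n=\Phi_{\pm}W_{n+1}+qW_n$. So a complete write-up must either restrict (ii) to $q=1$ (the Fibonacci/Pell case, where the paper's form and the correct form coincide) or amend the statement; as proposed, the inductive step for (ii) would simply fail when you try to carry it out.
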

Identity (i) of Lemma \ref{lemma1} can easily be verifed using induction on $n$. 
The proofs of  (i), (ii) and (iii), and (iv) can be found in \cite{Rabago2} and \cite{Rabago1}, respectively.
In addition to the above lemmas, we have, for any integer $r$,
\begin{equation}\label{infinityratio}
\lim_{n\rightarrow \infty} \frac{W_{n+r}}{W_n}=\Phi_+^r.
\end{equation}
For related papers and recent developments on these numbers, we refer the readers to a survey paper of Larcombe \cite{larcombe} and others \cite{lbf}.
Throughout our discussion we assume $W_n,$ the $n^{th}$ Horadam number, to satisfy the recurrence equation $W_{n+1}=pW_n + qW_{n-1}$ with initial conditions $W_0=0$ and $W_1=1$, unless specified.

\section{The case $\nu=1$}

In this section we study the case when $\nu =1$. 
Considering the difference equations defined in \eqref{problem1} and \eqref{problem2} for $\nu =1$, it is easy to see that the equilibrium points are 
$\bar{x} = \Phi_{\pm}-p$ and $\bar{y} = \Phi_{\pm}$,
respectively.  

\begin{theorem}\label{Theorem1}
The solutions of equations \eqref{problem1} and \eqref{problem2} are as follows:
\begin{enumerate}
\item[\rm (i)] For $x_0 \in \mathbb{R}\setminus\left(\left\{\frac{1}{\Phi_+}, \frac{1}{\Phi_-}\right\} \cup \left\{-\frac{W_{m+1}}{W_m}\right\}_{m=1}^{\infty}\right)$, $x_n= \dfrac{qW_n+x_0qW_{n-1}}{W_{n+1}+x_0W_n}$ for all $n \in \mathbb{N},$
\item[\rm (ii)] For $y_0 \in \mathbb{R}\setminus\left(\left\{\Phi_+, \Phi_-\right\} \cup \left\{\frac{W_{m+1}}{W_m}\right\}_{m=1}^{\infty}\right)$, $y_n= \dfrac{qW_{-n}+y_0qW_{-(n-1)}}{W_{-(n+1)}+y_0W_{-n}}$ for all $n \in \mathbb{N}.$
\end{enumerate}
\end{theorem}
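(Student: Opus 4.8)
The plan is to prove both closed forms by induction on $n$, after first noting why the Horadam recurrence is forced to appear. Setting $x_n = N_n/D_n$ and substituting into \eqref{problem1} with $\nu = 1$ gives $x_{n+1} = qD_n/(pD_n + N_n)$, so one may propagate the pair by $N_{n+1} = qD_n$ and $D_{n+1} = pD_n + N_n$; eliminating $N$ leaves $D_{n+1} = pD_n + qD_{n-1}$, which is exactly the recurrence defining $\{W_n\}$. This bookkeeping is what suggests taking the numerator and denominator to be the Horadam combinations appearing in the statement, and it reduces the theorem to verifying that these combinations satisfy the right recurrences with the right initial data, a task I carry out by induction beginning at $n = 1$.

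For part (i) the base case $n = 1$ uses $W_0 = 0$, $W_1 = 1$ and $W_2 = pW_1 + qW_0 = p$, which turns the right-hand side into $q/(p + x_0)$, in agreement with \eqref{problem1}. For the inductive step I assume $x_n = (qW_n + x_0 q W_{n-1})/(W_{n+1} + x_0 W_n)$, substitute into $x_{n+1} = q/(p + x_n)$, and clear the inner fraction by multiplying through by $W_{n+1} + x_0 W_n$. The numerator is then $q(W_{n+1} + x_0 W_n)$ at once, while the denominator regroups as $(pW_{n+1} + qW_n) + x_0(pW_n + qW_{n-1}) = W_{n+2} + x_0 W_{n+1}$ after two applications of the defining recurrence. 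This is precisely the asserted formula with $n$ replaced by $n+1$, which closes the induction.

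Part (ii) is entirely parallel but phrased through the negatively indexed numbers, using $W_{-n} = -pW_{-n+1} + qW_{-n+2}$ (equivalently $W_{-n} = (-1)^{n+1}W_n$). Here the base case relies on $W_{-1} = 1$, $W_{-2} = -p$ and $W_0 = 0$ to recover $y_1 = q/(-p + y_0)$, and the inductive step mirrors the one above: feeding the hypothesis into $y_{n+1} = q/(-p + y_n)$ produces numerator $q(W_{-(n+1)} + y_0 W_{-n})$ and, after invoking the negative-index recurrence twice, denominator $W_{-(n+2)} + y_0 W_{-(n+1)}$, matching the claim at index $n+1$.

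The step I expect to demand the most care is well-definedness, since each manipulation above silently divides by a Horadam expression. For \eqref{problem1} the displayed denominator $W_{n+1} + x_0 W_n$ vanishes exactly when $x_0 = -W_{n+1}/W_n$, while a short computation with the recurrence shows $p + x_n = 0$ is equivalent to $W_{n+2} + x_0 W_{n+1} = 0$, i.e. $x_0 = -W_{n+2}/W_{n+1}$; thus deleting $\{-W_{m+1}/W_m\}_{m=1}^{\infty}$ is exactly what prevents the recursion from ever hitting the singularity set and keeps the formula defined for all $n$, and the analogous computation excludes $\{W_{m+1}/W_m\}_{m=1}^{\infty}$ for \eqref{problem2}. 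The remaining exceptional data are handled separately: $y_0 \in \{\Phi_+, \Phi_-\}$ are the equilibria of \eqref{problem2} and give the constant solution outright, whereas for $x_0 \in \{1/\Phi_+, 1/\Phi_-\}$ identity (ii) of Lemma \ref{lemma1}, $\Phi_{\pm}^{\,n} = \Phi_{\pm} W_n + W_{n-1}$, collapses the quotient to the constant $q/\Phi_{\pm}$ for every $n \ge 1$, so these too fall outside the generic closed form and need not be included.
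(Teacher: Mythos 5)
Your proof is correct and takes essentially the same route as the paper's: induction on $n$ driven by the Horadam recurrence, with the forward recurrence $W_{n+2}=pW_{n+1}+qW_n$ handling part (i) and the negative-index recurrence (seeded by $W_{-1}=1$, $W_{-2}=-p$, $W_0=0$) handling part (ii) — the paper writes out only the induction for (ii) and omits (i) as similar. Your closing discussion of well-definedness, showing that deleting $\left\{-W_{m+1}/W_m\right\}_{m=1}^{\infty}$ (resp. $\left\{W_{m+1}/W_m\right\}_{m=1}^{\infty}$) is precisely what keeps the iteration off the singularity set, addresses a point the paper passes over in silence and is a worthwhile supplement rather than a departure in method.
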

\begin{proof} 
We only prove (ii). 
The proof of (i) is similar so we omit it. We proceed by induction. 
For $k=1$, 
\[
\frac{qW_{-1}+y_0qW_0}{W_{-2}+y_0W_{-1}}= \frac{q\cdot1+y_0q\cdot0}{-p+y_0\cdot1} = \frac{q}{-p+y_0}.
\] 
Now suppose that it holds for some natural number $k>1.$ Then,
\begin{equation}\label{waysabkey}
 y_k= \frac{qW_{-k}+y_0qW_{-(k-1)}}{W_{-(k+1)}+y_0W_{-k}}.
\end{equation}
We have, by equations \eqref{problem2} and \eqref{waysabkey},
\begin{align*}
y_{k+1} 
= \frac{qW_{-(k+1)}+y_0qW_{-k}}{-pW_{-(k+1)}+qW_{-k}+(-pW_{-k}+qW_{-(k-1)})y_0}
=  \frac{qW_{-(k+1)}+y_0qW_{-k}}{W_{-(k+2)}+y_0W_{-(k+1)}},
\end{align*}
proving the theorem.
\end{proof}

For convenience, we use the following notations in the rest of our discussion:
\[X(p,q) := \mathbb{R}\setminus\left(\left\{\frac{1}{\Phi_+(p,q)}, \frac{1}{\Phi_-(p,q)}\right\} \cup \left\{-\frac{W_{m+1}(p,q)}{W_m(p,q)}\right\}_{m=1}^{\infty}\right)\]
and
\[Y(p,q) := \mathbb{R}\setminus\left(\left\{\Phi_+(p,q), \Phi_-(p,q)\right\} \cup \left\{\frac{W_{m+1}(p,q)}{W_m(p,q)}\right\}_{m=1}^{\infty}\right),\]
where $\Phi_{\pm}$ is as defined before and $W_m(p,q)$ denotes the $m^{th}$ Horadam number with initial condition $W_0(p,q)=0$ and $W_1(p,q)=1$.
We provide the following example as an illustration of the previous theorem.
\begin{example}
\label{example1}
Consider the following Riccati difference equations whose solutions are associated to Pell numbers:
\[
x_{n+1} = \frac{1}{2+x_n}
\quad\text{and}\quad
y_{n+1} = \frac{1}{-2+y_n}.
\]
Using the results of Theorem \ref{Theorem1} we readily find the following respective solution form of the above equations:
\begin{enumerate}
\item[\rm (i)] for $x_0 \in X(2,1)$, $x_n= \dfrac{P_n+x_0P_{n-1}}{P_{n+1}+x_0P_n},$ for all $n \in \mathbb{N}$,

\item[\rm (ii)] for $y_0 \in Y(2,1)$, $y_n= \dfrac{P_{-n}+y_0P_{-(n-1)}}{P_{-(n+1)}+y_0P_{-n}}$ for all $n \in \mathbb{N}$.
\end{enumerate}
Here $\sigma$ denotes the silver ratio, i.e., $\sigma=1+\sqrt{2}$ and $P_n$ is the $n^{th}$ Pell number.
\end{example}

\begin{theorem}\label{Theorem3}
Let $\{x_n\}_{n=0}^{\infty}$ and $\{y_n\}_{n=0}^{\infty}$ be the solutions of \eqref{problem1} and \eqref{problem2}, respectively and $x_0\in \mathbb{R}\setminus\left\{-\frac{W_{m+1}}{W_m}\right\}_{m=1}^{\infty}$. 
Then, $\{x_n\}_{n=0}^{\infty}=\{-y_n\}_{n=0}^{\infty}$ is true if and only if the initial condition $x_0=-y_0.$
\end{theorem}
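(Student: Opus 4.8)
The statement is a biconditional whose forward implication is immediate: if $x_n = -y_n$ holds for every $n \geq 0$, then reading this off at $n = 0$ already yields $x_0 = -y_0$. The content therefore lies entirely in the reverse implication, and my plan is to prove it by induction on $n$, the base case $x_0 = -y_0$ being exactly the hypothesis.

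For the inductive step (working in the case $\nu = 1$ treated in this section), I would suppose $x_n = -y_n$, i.e. $y_n = -x_n$, substitute into the two recurrences \eqref{problem1} and \eqref{problem2}, and compute
\[
-y_{n+1} = -\frac{q}{-p + y_n} = -\frac{q}{-p - x_n} = \frac{q}{p + x_n} = x_{n+1},
\]
so that $x_{n+1} = -y_{n+1}$ and the induction closes. The whole argument hinges on this one line: substituting $y_n = -x_n$ converts the parameter $-p$ of the second equation into the $+p$ of the first, which is the only structural difference between them. I would note in passing that this mechanism is special to odd $\nu$, since in general $(-x_n)^\nu = (-1)^\nu x_n^\nu$; for $\nu = 1$ the identity holds verbatim.

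The single point needing attention — and the only real obstacle — is ensuring that both sequences remain well-defined at every index. The hypothesis $x_0 \in \mathbb{R}\setminus\{-W_{m+1}/W_m\}_{m=1}^\infty$ removes precisely the forbidden set of \eqref{problem1}, so by Theorem \ref{Theorem1}(i) the sequence $\{x_n\}$ is defined for all $n$, whence $p + x_n \neq 0$ at each step. I would emphasise that no separate restriction on $y_0$ is required: the denominator occurring in the $y$-recurrence equals $-p + y_n = -(p + x_n)$, which is nonzero exactly because the corresponding $x$-step is defined. Equivalently, the involution $x_0 \mapsto -x_0$ carries the forbidden set $\{-W_{m+1}/W_m\}$ of \eqref{problem1} onto the forbidden set $\{W_{m+1}/W_m\}$ of \eqref{problem2}, so $y_0 = -x_0$ lands automatically in the admissible domain $Y(p,q)$. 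Thus both sequences are simultaneously defined and the displayed identity is valid for every $n$, which completes the proof.
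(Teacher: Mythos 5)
Your proof is correct, but it takes a genuinely different route from the paper's. The paper argues entirely through the closed-form solutions of Theorem \ref{Theorem1}: for the forward direction it equates the two closed forms at a general index $n$, cross-multiplies, and invokes Cassini's formula (Lemma \ref{lemma1}(iii)) to cancel the factor $W_{n-1}W_{n+1}-W_n^2=-(-q)^{n-1}$ and conclude $x_0=-y_0$; for the reverse direction it rewrites the formula for $x_n$ using the identity $W_{-n}=(-1)^{n+1}W_n$ and recognizes the result as $-y_n$. You bypass the closed forms entirely: the forward direction by reading the identity off at $n=0$ (which is all the statement requires, since sequence equality is termwise), and the reverse by induction directly on the recurrences, the key observation being that substituting $y_n=-x_n$ turns the $-p$ of \eqref{problem2} into the $+p$ of \eqref{problem1} when $\nu=1$. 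Your route is more elementary and self-contained---no Horadam identities are needed---and you additionally address well-definedness of both sequences, a point the paper passes over in silence. What the paper's route buys in exchange is a slightly stronger fact (agreement of the two closed forms at any \emph{single} index $n$ already forces $x_0=-y_0$) and an explicit display of the relation between positively- and negatively-indexed Horadam numbers, which ties the theorem to Theorem \ref{Theorem1}. One pedantic caveat on your well-definedness step: the hypothesis of Theorem \ref{Theorem3} permits $x_0=1/\Phi_{\pm}$, values that Theorem \ref{Theorem1}(i) formally excludes, so your citation of Theorem \ref{Theorem1}(i) is applied on a slightly larger set than its statement covers; this is harmless, since those two initial values also generate well-defined (eventually constant) solutions, and in any case your underlying argument---avoiding the forbidden set is by definition what guarantees every $x_n$ exists, hence every denominator $p+x_n$ is nonzero---stands on its own.
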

\begin{proof}
The above result maybe trivial to prove but we proceed on proving for the sake of completeness. 
First, suppose that $\{x_n\}_{n=0}^{\infty}=\{-y_n\}_{n=0}^{\infty}$. 
Then, by equation \eqref{problem2}, we have
\[
\frac{qW_n+x_0qW_{n-1}}{W_{n+1}+x_0W_n} 
= -\frac{qW_{-n}+y_0qW_{-(n-1)}}{W_{-(n+1)}+y_0W_{-n}} 
= \frac{qW_n-y_0qW_{n-1}}{W_{n+1}-y_0W_n}.
\]
Hence,
$
\frac{W_n+x_0W_{n-1}}{W_{n+1}+x_0W_n} 
= \frac{W_n-y_0W_{n-1}}{W_{n+1}-y_0W_n}.
$ 
With Cassini\rq{}s identity, the latter equation implies
\begin{align*}
(W_{n-1}W_{n+1}-W_n^2)x_0 &= -(W_{n-1}W_{n+1}-W_n^2)y_0 \\
(-q)^{n-1}x_0&=-(-q)^{n-1}y_0\\
x_0&=-y_0.
\end{align*}
On the other hand, suppose $x_0=-y_0$. 
Using equation \eqref{problem1}, we have
\begin{align*}
x_n &= \frac{qW_n+x_0qW_{n-1}}{W_{n+1}+x_0W_n}
= \frac{(-1)^{n+1}qW_n-(-1)^{n+1}y_0qW_{n-1}}{(-1)^{n+1}W_{n+1}-(-1)^{n+1}y_0W_n}\\
&= \frac{qW_{-n}+y_0qW_{-(n-1)}}{-W_{-(n+1)}-y_0W_{-n}}
= \frac{qW_{-n}+y_0qW_{-(n-1)}}{-(W_{-(n+1)}+y_0W_{-n})}
= -y_n.
\end{align*}
This proves the theorem.
\end{proof}
We illustrate our previous result with the following example.

\begin{example}
Consider the following nonlinear difference equations:
\[
x_{n+1} = \dfrac{7}{2+x_n}
\quad \text{and} \quad
y_{n+1} = \dfrac{7}{-2+y_n}
\]
with initial condition $x_0=3=-(-3)=y_0$. 
By Theorem \ref{Theorem3}, we have $\{x_n\}_{n=0}^{\infty}=\{-y_n\}_{n=0}^{\infty}$.  
The long term dynamics of the two nonlinear equations with the given initial conditions are shown in Figure \ref{fig1}.
\begin{figure}[h!]
   \centering
    \scalebox{.4}{\includegraphics{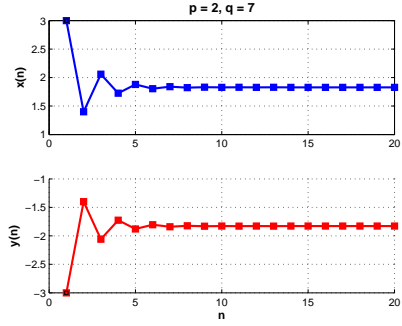}}
    \caption{Plot of $x_{n+1}=\frac{7}{2+x_n}$ and $y_{n+1}=\frac{7}{-2+y_n}$ with initial value $x_0 =-y_0=3$.}
    \label{fig1}
\end{figure}
\end{example}
\begin{theorem}
Given equations \eqref{problem1} and \eqref{problem2}, the following statements hold:
\begin{enumerate}
\item[\rm (i)] Let $x_0 = 1/\Phi_{\pm}$ then \eqref{problem1} has a fixed solution $x_n = q/\Phi_{\pm}.$
\item[\rm (ii)]  Let $y_0 = \Phi_{\pm}$ then \eqref{problem2} has a fixed solution $y_n = \Phi_{\pm}.$
\end{enumerate}
\end{theorem}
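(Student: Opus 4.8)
The plan is to read both assertions as statements about constant (\emph{fixed}) solutions and to verify them by substituting the prescribed initial value into the closed forms of Theorem~\ref{Theorem1}, simplifying with the Horadam identities of Lemma~\ref{lemma1}. For part~(ii) I would give the shortest argument first: a constant sequence $y_n\equiv\Phi_{\pm}$ solves \eqref{problem2} exactly when $\Phi_{\pm}=q/(-p+\Phi_{\pm})$, that is, when $\Phi_{\pm}^{2}-p\Phi_{\pm}=q$. Since $\Phi_{\pm}$ are by construction the roots of $x^{2}=px+q$, this identity holds automatically; hence $y_{0}=\Phi_{\pm}$ forces $y_{1}=\Phi_{\pm}$, and an immediate induction gives $y_{n}=\Phi_{\pm}$ for all $n$.

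For part~(i) I would work directly from the closed form $x_{n}=\dfrac{q\left(W_{n}+x_{0}W_{n-1}\right)}{W_{n+1}+x_{0}W_{n}}$ of Theorem~\ref{Theorem1}(i). Putting $x_{0}=1/\Phi_{\pm}$ and extracting the factor $1/\Phi_{\pm}$ from numerator and denominator rewrites them as $\tfrac{q}{\Phi_{\pm}}\bigl(\Phi_{\pm}W_{n}+W_{n-1}\bigr)$ and $\tfrac{1}{\Phi_{\pm}}\bigl(\Phi_{\pm}W_{n+1}+W_{n}\bigr)$, respectively. Lemma~\ref{lemma1}(ii) then collapses each bracket to a single power of $\Phi_{\pm}$, namely $\Phi_{\pm}^{n}$ and $\Phi_{\pm}^{n+1}$, so that $x_{n}=\dfrac{(q/\Phi_{\pm})\,\Phi_{\pm}^{n}}{(1/\Phi_{\pm})\,\Phi_{\pm}^{n+1}}=\dfrac{q}{\Phi_{\pm}}$, the claimed fixed solution. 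As a consistency check I would note that this value is genuinely an equilibrium of \eqref{problem1}: using $\Phi_{+}\Phi_{-}=-q$ and $\Phi_{+}+\Phi_{-}=p$ one has $q/\Phi_{\pm}=\Phi_{\pm}-p$, which is exactly the equilibrium point $\bar{x}=\Phi_{\pm}-p$ recorded at the start of this section.

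The step that needs genuine care---and the one I expect to be the main obstacle---is the legitimacy of invoking the Theorem~\ref{Theorem1}(i) formula at $x_{0}=1/\Phi_{\pm}$, since this value is deliberately deleted from the domain $X(p,q)$. I would dispose of this by recalling that the closed form is obtained from \eqref{problem1} by a bare induction on $n$ and is therefore valid at any $x_{0}$ for which no denominator $W_{n+1}+x_{0}W_{n}$ vanishes. The computation above shows that at $x_{0}=1/\Phi_{\pm}$ this denominator equals $\Phi_{\pm}^{n}\neq0$ for every $n$, so the orbit never meets the forbidden set and the formula does apply. Once that is settled, all that remains is the routine Horadam bookkeeping carried by Lemma~\ref{lemma1}(ii), and the two parts follow.
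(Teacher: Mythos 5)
Your part (ii) is correct, but it takes a genuinely different route from the paper's. The paper proves (i) through the closed form of Theorem \ref{Theorem1} together with Lemma \ref{lemma1}(ii) and then declares (ii) ``similar''; you instead verify directly that $\Phi_{\pm}$ satisfies the fixed-point equation of \eqref{problem2}, namely $\Phi_{\pm}=q/(-p+\Phi_{\pm})$, which holds because $\Phi_{\pm}^{2}=p\Phi_{\pm}+q$, and then induct. This is shorter, needs no Horadam identities, and sidesteps the excluded initial values of Theorem \ref{Theorem1} entirely; it works precisely because $\Phi_{\pm}$ are the equilibria of \eqref{problem2}.

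Part (i) is where a genuine gap lies, and it is a gap you share with the paper, whose computation you reproduce step for step. The collapse of $\Phi_{\pm}W_{n}+W_{n-1}$ to $\Phi_{\pm}^{n}$ invokes Lemma \ref{lemma1}(ii) exactly as stated, but that identity is the Fibonacci-specific ($q=1$) one; for general Horadam parameters the correct identity is $\Phi_{\pm}^{n}=\Phi_{\pm}W_{n}+qW_{n-1}$ (test $n=2$: $\Phi_{\pm}W_{2}+W_{1}=p\Phi_{\pm}+1$, whereas $\Phi_{\pm}^{2}=p\Phi_{\pm}+q$). So for $q\neq1$ the brackets do not collapse, and in fact the statement itself fails: applying your own method from (ii) to (i) gives $x_{1}=q/(p+1/\Phi_{\pm})=q\Phi_{\pm}/(p\Phi_{\pm}+1)$, which equals $q/\Phi_{\pm}$ if and only if $q=1$. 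Concretely, in the Jacobsthal case $p=1$, $q=2$ used elsewhere in the paper, one has $\Phi_{+}=2$ and $\Phi_{-}=-1$; starting from $x_{0}=1/\Phi_{+}=1/2$ gives $x_{1}=4/3\neq1=q/\Phi_{+}$, and starting from $x_{0}=1/\Phi_{-}=-1$ makes $x_{1}$ undefined --- which also defeats your claim that the denominator $W_{n+1}+x_{0}W_{n}$ never vanishes (that claim, too, rests on the faulty identity). Your consistency check actually contains the repair: since $q/\Phi_{\pm}=\Phi_{\pm}-p$ is an equilibrium of \eqref{problem1}, the statement that is true for all $q$ is that $x_{0}=q/\Phi_{\pm}$ (which coincides with $1/\Phi_{\pm}$ only when $q=1$) yields the fixed solution $x_{n}=q/\Phi_{\pm}$, and then your one-line induction from part (ii) proves it verbatim.
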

\begin{proof}
We only prove (i). The proof of (ii) can be done in a similar fashion.
So consider equation \eqref{problem1} and let $x_0=1/\Phi_+$. 
Then, using identity (ii) of Lemma \ref{lemma1}, we have
\[
x_n = \frac{qW_n+q\frac{W_{n-1}}{\Phi_{\pm}}}{W_{n+1}+\frac{W_n}{\Phi_{\pm}}}
=\frac{\Phi_{\pm}W_n+W_{n-1}}{\Phi_{\pm}W_{n+1}+W_n}q 
=\frac{\Phi_{\pm}^n}{\Phi_{\pm}^{n+1}}q
=\frac{q}{\Phi_{\pm}},
\] 
which is desired.
\end{proof}

\begin{theorem}
The following statements hold:

\begin{enumerate}
\item[\rm (i)] For $x_0 \in X(p,q)\cup\left\{1/\Phi_+(p,q)\right\}$, we have $\lim_{n\rightarrow \infty} x_n= -\Phi_-,$

\item[\rm (ii)] For $y_0 \in Y(p,q)\cup\left\{\Phi_-(p,q)\right\}$, we have $\lim_{n\rightarrow \infty} y_n= \Phi_-.$ 
\end{enumerate}
\end{theorem}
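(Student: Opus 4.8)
The plan is to substitute the closed-form solutions obtained in Theorem~\ref{Theorem1} into the limit and then exploit the asymptotic ratio recorded in \eqref{infinityratio}, namely $W_{n+r}/W_n \to \Phi_+^r$ as $n\to\infty$. The boundary values $x_0 = 1/\Phi_+$ and $y_0 = \Phi_-$ that are adjoined to $X(p,q)$ and $Y(p,q)$ would be dispatched separately, since for these the preceding theorem on fixed solutions already yields constant solutions whose limits are immediate.

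For part (i) I would first assume $x_0 \in X(p,q)$ and rewrite the solution from Theorem~\ref{Theorem1}(i) as
\[
x_n = q\,\frac{W_n + x_0 W_{n-1}}{W_{n+1} + x_0 W_n}
= q\,\frac{1 + x_0\,(W_{n-1}/W_n)}{(W_{n+1}/W_n) + x_0}.
\]
Letting $n\to\infty$ and applying \eqref{infinityratio} with $r=-1$ and $r=1$ gives $W_{n-1}/W_n \to 1/\Phi_+$ and $W_{n+1}/W_n \to \Phi_+$, whence the factor $(\Phi_+ + x_0)$ cancels and
\[
\lim_{n\to\infty} x_n = q\,\frac{1 + x_0/\Phi_+}{\Phi_+ + x_0} = \frac{q}{\Phi_+}.
\]
Using $\Phi_+\Phi_- = -q$ from Section~2, this equals $-\Phi_-$, as claimed. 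For the adjoined value $x_0 = 1/\Phi_+$ the fixed-solution theorem gives $x_n \equiv q/\Phi_+ = -\Phi_-$ for every $n$, so the limit is trivial.

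Part (ii) follows the same template after converting the negative indices: using $W_{-n} = (-1)^{n+1}W_n$ from Section~2 one finds $W_{-(n-1)}/W_{-n} = -W_{n-1}/W_n \to -1/\Phi_+$ and $W_{-(n+1)}/W_{-n} = -W_{n+1}/W_n \to -\Phi_+$. Dividing numerator and denominator of the formula in Theorem~\ref{Theorem1}(ii) by $W_{-n}$ and passing to the limit, the factor $(y_0 - \Phi_+)$ cancels and produces
\[
\lim_{n\to\infty} y_n = q\,\frac{1 - y_0/\Phi_+}{y_0 - \Phi_+} = -\frac{q}{\Phi_+} = \Phi_-.
\]
Again the adjoined value $y_0 = \Phi_-$ yields the constant solution $y_n \equiv \Phi_-$ by the fixed-solution theorem.

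The delicate point in both parts is that the cancellation of $(\Phi_+ + x_0)$, respectively $(y_0 - \Phi_+)$, is legitimate only when that factor is nonzero. For part (ii) this is automatic, since $\Phi_+ \notin Y(p,q)$ by definition and $\Phi_+ \neq \Phi_-$, so necessarily $y_0 \neq \Phi_+$. For part (i), however, the value $x_0 = -\Phi_+$ is precisely the second equilibrium $\bar{x} = \Phi_- - p$; there $W_{n+1} + x_0 W_n = \Phi_-^n \neq 0$ by Binet's formula, so the solution remains defined but stays constant equal to $q/\Phi_- = -\Phi_+$ rather than tending to $-\Phi_-$. Hence the argument requires $x_0 \neq -\Phi_+$, and I expect the main obstacle to be settling the status of this repelling equilibrium and treating (or explicitly excluding) it, since it lies in $X(p,q)$ whenever $q \neq 1$.
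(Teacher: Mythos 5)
Your proof is correct, but it is not the argument the paper actually prints: the paper's written proof is the shortcut it calls ``another approach'' --- suppose $\lim_{n\rightarrow\infty} x_n = L$ exists, pass to the limit in \eqref{problem1} to get $L^2+pL-q=0$, and solve for $L$. Your route, substituting the closed form of Theorem \ref{Theorem1} and invoking \eqref{infinityratio}, is the one the paper merely gestures at in its opening sentence and then abandons, and it buys strictly more: the paper never establishes that the limit exists, and the quadratic it solves has two roots, $-\Phi_-$ and $-\Phi_+$, with no criterion offered for discarding the second, whereas your computation proves convergence and produces the value $q/\Phi_+=-\Phi_-$ explicitly, while also dispatching the adjoined constant solutions $x_0=1/\Phi_+$ and $y_0=\Phi_-$ that the paper leaves untouched. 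Moreover, your closing observation identifies a genuine defect in the statement itself that both the paper's theorem and its proof overlook: since $1/\Phi_-=-\Phi_+/q$, the point $x_0=-\Phi_+$ coincides with the excluded point $1/\Phi_-$ only when $q=1$; for $q\neq 1$ it belongs to $X(p,q)$, and there Binet's formula gives $W_{n+1}-\Phi_+W_n=\Phi_-^n\neq 0$, hence $x_n\equiv q/\Phi_-=-\Phi_+\neq-\Phi_-$, so the claimed limit fails at that initial value. Part (i) therefore needs the extra hypothesis $x_0\neq-\Phi_+$ (equivalently $x_0\neq\Phi_--p$, the second equilibrium); this is precisely where the paper's fixed-point argument silently selects the wrong root of the quadratic. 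Part (ii) needs no such repair, as you correctly note, because $\Phi_+\notin Y(p,q)$ by definition.
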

\begin{proof}
The proof of the above statements are similar. 
Hence, we only show (i). 
One may prove this by simply letting $n \rightarrow \infty $ in Theorem \ref{Theorem3} and use equation \eqref{infinityratio}. 
Instead, we provide another approach. Suppose $\lim_{n\rightarrow \infty} x_n = L$. 
Then, letting $n\rightarrow\infty$ on both sides of \eqref{problem1}, 
we get $L=q/(p+L)$ or equivalently, $L^2 + p L -q=0$.
Solving for $L$, we obtain the desired result.
\end{proof}

\begin{example}
\label{example2}
As an example, we consider the equations considered in Example \ref{example1} with initial conditions
$x_0 \in X(2,1) \cup \{1/\sigma\}$
and
$y_0 \in Y(2,1)\cup\{2-\sigma\}$,
respectively. Hence, $\lim_{n\rightarrow\infty} x_n = \sigma-2 = 0.4142$ (approx.) and $\lim_{n\rightarrow\infty} y_n=2-\sigma = -0.4142$ (approx.). 
The results are illustrated in Figure \ref{fig2} with initial conditions $x_0=2$ and $y_0=3$, respectively.
\begin{figure}[h!]
   \centering
    \scalebox{0.4}{\includegraphics{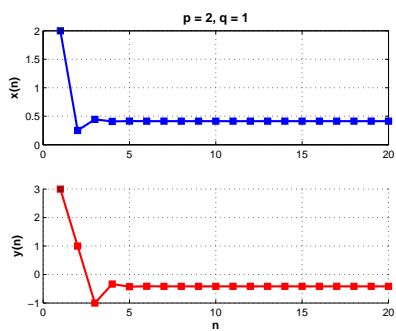}}
    \caption{Plot of $x_{n+1}=\frac{1}{2+x_n}$ and $y_{n+1}=\frac{1}{-2+y_n}$ with initial values $x_0=2$ and $y_0=3$, respectively.}
    \label{fig2}
\end{figure}
 \end{example} 
\begin{theorem}\label{theorem8}
Let $p> q-1$ and $\{x_n\}_{n=0}^{\infty}$ (resp. $\{y_n\}_{n=0}^{\infty}$) be the solution of \eqref{problem1} (resp. \eqref{problem2}). 
Then, $\prod_{i=0}^n x_i \rightarrow W_0$ (resp. $\prod_{i=0}^n y_i \rightarrow W_0$) as $n \rightarrow \infty$.
\end{theorem}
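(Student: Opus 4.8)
The plan is to exploit the explicit solution forms of Theorem~\ref{Theorem1} together with the limiting behavior already established, reducing the convergence of the product to a single inequality between the parameters. The starting observation is that, by the preceding theorem, $x_n\to-\Phi_-$ and $y_n\to\Phi_-$; moreover $\Phi_+\Phi_-=-q$ gives $-\Phi_-=q/\Phi_+$ and $|\Phi_-|=q/\Phi_+$. Thus both sequences converge to a limit of modulus $q/\Phi_+$, and if I can show this modulus is strictly less than $1$ under the hypothesis $p>q-1$, then each tail factor is eventually bounded by some $r<1$ and the infinite product collapses to $0=W_0$.

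The crux of the argument is therefore the equivalence
\[
p>q-1 \iff q/\Phi_+<1 \iff \Phi_+>q.
\]
I would prove $\Phi_+>q$ directly from $\Phi_+=(p+\sqrt{p^2+4q})/2$: the inequality $\Phi_+>q$ is $\sqrt{p^2+4q}>2q-p$, which holds automatically when $2q-p\le0$, and when $2q-p>0$ follows by squaring, since $p^2+4q>(2q-p)^2$ simplifies to $4q(p+1-q)>0$, i.e.\ $p>q-1$ (using $q>0$). This is the only place where the precise hypothesis enters, and it is the step I expect to require the most care, mainly in organizing the two sign cases of $2q-p$.

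With $q/\Phi_+<1$ in hand, the conclusion follows from a standard estimate: choosing $r$ with $q/\Phi_+<r<1$, convergence of $x_n$ to $-\Phi_-$ yields $N$ with $|x_n|\le r$ for $n\ge N$, so $\bigl|\prod_{i=0}^n x_i\bigr|\le\bigl|\prod_{i=0}^{N-1}x_i\bigr|\,r^{\,n-N+1}\to0$. For a sharper, self-contained alternative I would telescope the solution form: writing $B_n:=W_{n+1}+x_0W_n$ and noticing $W_n+x_0W_{n-1}=B_{n-1}$, Theorem~\ref{Theorem1}(i) gives $x_n=qB_{n-1}/B_n$ for $n\ge1$, whence $\prod_{i=0}^n x_i=q^n x_0/(W_{n+1}+x_0W_n)$; Binet's formula then shows the denominator grows like $\Phi_+^n$, so the product behaves like a constant times $(q/\Phi_+)^n$ and vanishes precisely when $p>q-1$. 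The statement for $\{y_n\}$ follows either by the analogous telescoping or, more quickly, from Theorem~\ref{Theorem3}: taking $x_0=-y_0$ gives $\prod_{i=0}^n y_i=(-1)^{n+1}\prod_{i=0}^n x_i\to0$. The one caveat to flag is the repelling equilibrium $x_0=-\Phi_+$ (equivalently, the vanishing of the leading coefficient $\Phi_++x_0$), for which $x_n\equiv-\Phi_+$ and the product diverges; the argument implicitly requires $x_0$ to lie in the basin of $-\Phi_-$, i.e.\ $x_0\neq-\Phi_+$.
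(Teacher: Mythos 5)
Your proposal is correct, and your primary argument takes a genuinely different route from the paper's. The paper works directly with the closed form from Theorem~\ref{Theorem1}: it telescopes the product to $\prod_{i=0}^n x_i = q^n x_0/(W_{n+1}+x_0W_n)$, disposes of $q\in(0,1]$ separately, and for $q>1$ rewrites the limit via $W_{n+1}/W_n\to\Phi_+$ and Binet's formula as a constant times $\left[\lim_n(\Phi_+/q)^n-\lim_n(\Phi_-/q)^n\right]^{-1}$, then verifies $\Phi_+/q>1$ and $-1<\Phi_-/q<0$ from $p>q-1$ --- this is precisely your ``alternative'' telescoping route, so you have in effect reconstructed the paper's proof as a side remark. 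Your main route needs no closed form at all: the terms converge to $-\Phi_-$ (resp.\ $\Phi_-$), whose modulus is $q/\Phi_+$, and your two-case squaring argument establishing $\Phi_+>q \iff p>q-1$ is correct and cleaner than the paper's corresponding inequality work (which contains a slip: it invokes $\sqrt{a+b}<\sqrt{a}+\sqrt{b}$ but writes $\sqrt{p}+\sqrt{2q}$ where $p+2\sqrt{q}$ is meant, and even corrected that step needs $q>1$, which only holds in that branch). The trade-off is this: your soft geometric-tail argument is shorter, avoids the case split on $q$, and makes the role of the hypothesis $p>q-1$ completely transparent, but it only shows that the product vanishes; the paper's computation produces the explicit limiting constant $x_0\sqrt{p^2+4q}/(\Phi_++x_0)$, which is exactly the formula reused in Theorem~\ref{theorem9} to handle the boundary case $p=q-1$ and the divergence case $p<q-1$, so the paper's heavier route is doing double duty. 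Finally, your caveat about $x_0=-\Phi_+$ is well taken and is a genuine improvement in care: the constant solution at the negative equilibrium makes the statement fail there whenever $\Phi_+\geq 1$, the paper's proof silently divides by $\Phi_++x_0$, and the prior limit theorem you cite has the same blind spot (its proof assumes the limit exists and then selects the positive root); strictly, both your proof and the paper's establish the theorem only for initial values off this exceptional set.
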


\begin{proof}
We only prove the result for \eqref{problem1}. 
The same inductive lines, however, can be followed inductively to obtain a similar result for equation \eqref{problem2}.
Now, the case when $x_0=W_0$ is obvious. 
If $x_0\neq W_0$ then, in view Theorem \ref{Theorem1}, we have
\[
x_1 = \frac{W_1 + x_0W_0}{W_2+x_0W_1}q, \quad
x_2 = \frac{W_2 + x_0W_1}{W_3+x_0W_2}q, \quad
\ldots,\quad
x_n  = \frac{W_n + x_0W_{n-1}}{W_{n+1}+x_0W_n}q.
\]
Hence, 
\[
\prod_{i=0}^nx_i=\frac{W_0+x_0W_{-1}}{W_{n+1}+x_0W_n}q^n
=\frac{q^nx_0}{W_{n+1}+x_0W_n}.
\]
Observe that $\prod_{i=0}^nx_i\rightarrow 0$ as $n\rightarrow \infty$ for $q\in(0,1]$. 
Now, if $q>1$ then we have
\begin{align*}
\lim_{n\rightarrow \infty} \prod_{i=0}^nx_i 
&= \lim_{n\rightarrow \infty}\frac{q^nx_0}{W_{n+1}+x_0W_n}
= \frac{x_0\lim_{n\rightarrow \infty} \frac{q^n}{W_n}}{\lim_{n\rightarrow \infty}\frac{W_{n+1}}{W_n}+x_0}\\
&=\frac{x_0\sqrt{p^2+4q}}{\Phi_++x_0} \left(\lim_{n\rightarrow \infty}\frac{\Phi_+^n}{q^n}-\lim_{n\rightarrow \infty}\frac{\Phi_-^n}{q^n}\right)^{-1}.
\end{align*}
Note that the inequality $p > q-1$ implies that $\Phi_+/q > 1$ and 
$
-1
<\Phi_-/q
<0$.
The right hand part of the latter inequality can be verified easily.
Meanwhile, to see $\Phi_-/q> -1$, we use the fact that $\sqrt{a+b} < \sqrt{a} +\sqrt{b}$ holds for all $a,b\in\mathbb{R}^+$.
Hence, $p+2q-\sqrt{p^2+4q}> p+2q - (\sqrt{p} + \sqrt{2q}) > 0$ which implies that $p-\sqrt{p^2+4q} > -2q$.
Upon dividing both sides of the latter inequality by $2q$, we get the desired result.
So $(\Phi_+/q)^{n+1} \rightarrow \infty$ and $(\Phi_-/q)^{n+1} \rightarrow 0$ as $n\rightarrow\infty$. 
Thus, $\lim_{n\rightarrow \infty} \prod_{i=0}^nx_i =W_0$, proving the theorem.
\end{proof}

Figure \ref{fig3} illustrates the product of solutions $\{x_n\}_{n=0}^{\infty}$ and $\{y_n\}_{n=0}^{\infty}$ of $x_{n+1}=1/(2+x_n)$ and $y_{n+1}=1/(-2+x_n)$, respectively which have been considered in Example \ref{example1} with initial condition $x_0=-y_0=2$.
\begin{figure}[h!]
   \centering
    \scalebox{.4}{\includegraphics{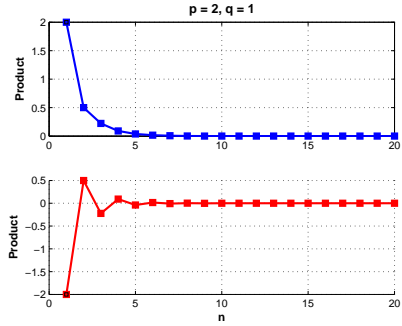}}
     \caption{The product of solutions of the two difference equations $x_{n+1}=\frac{1}{2+x_n}$ and $y_{n+1}=\frac{1}{-2+y_n}$ from $n=0$ to 20 are illustrated in the above figure (respectively, upper and lower plot).}
    \label{fig3}
\end{figure}

\begin{theorem}\label{theorem9}
Let $x_0\neq -\Phi_+$ (resp. $y_0 \neq -\Phi_+$) and $\{x_n\}_{n=0}^{\infty}$ (resp. $\{y_n\}_{n=0}^{\infty}$) be the solution of equation \eqref{problem1} (resp. equation \eqref{problem2}). 
Then, the following statements are true: 
\begin{enumerate}
\item[\rm (i)] If $p = q-1$, then we have 
\[\lim_{n\rightarrow \infty} \prod_{i=0}^nx_i 
=\dfrac{x_0\sqrt{p^2+4q}}{\Phi_++x_0} 
= \begin{cases} 
	\quad\lim_{n\rightarrow \infty} \prod_{i=0}^ny_i, & \text{when $n$ is even},\\[1em]
	-\lim_{n\rightarrow \infty} \prod_{i=0}^ny_i, & \text{when $n$ is odd},
\end{cases}.
\]
\item[\rm (ii)] If $p<q-1$, then the limits $\lim_{n\rightarrow \infty} \prod_{i=0}^nx_i$ and $\lim_{n\rightarrow \infty} \prod_{i=0}^ny_i$ diverges.
\end{enumerate}
\end{theorem}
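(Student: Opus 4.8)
The plan is to reduce everything to explicit partial-product formulas together with the observation that the hypothesis $p=q-1$ makes the characteristic roots especially simple. First I would record that $p=q-1$ gives $p^2+4q=(q+1)^2$, hence $\sqrt{p^2+4q}=q+1$, $\Phi_+=q$ and $\Phi_-=-1$; Binet's formula then collapses to $W_n=\frac{q^n-(-1)^n}{q+1}$. This identity is the engine driving both parts, since it lets me evaluate ratios like $q^n/W_n$ exactly.

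Next I would put the two partial products in closed form. The formula $\prod_{i=0}^n x_i=\frac{q^n x_0}{W_{n+1}+x_0W_n}$ is already available from the proof of Theorem \ref{theorem8}. For the $y$-equation I would derive the analogue $\prod_{i=0}^n y_i=\frac{(-q)^n y_0}{W_{n+1}-y_0W_n}$ by the same telescoping device: using $W_{-m}=(-1)^{m+1}W_m$ to rewrite the solution form of Theorem \ref{Theorem1}(ii) as $y_i=\frac{-q\,a_{i-1}}{a_i}$ with $a_i:=W_{i+1}-y_0W_i$, so that the product collapses to $y_0(-q)^n a_0/a_n$ with $a_0=W_1-y_0W_0=1$.

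For part (i) I would substitute the collapsed Binet formula into both products, factor $q^n$ out of numerator and denominator, and let $n\to\infty$. For the $x$-product this yields $\frac{x_0(q+1)}{q+x_0}=\frac{x_0\sqrt{p^2+4q}}{\Phi_++x_0}$, where the hypothesis $x_0\neq-\Phi_+$ secures a nonzero denominator. For the $y$-product the same substitution leaves a surviving factor $(-1)^n$ coming from $(-q)^n$, so $\prod_{i=0}^n y_i\to(-1)^n\frac{y_0(q+1)}{q-y_0}$; hence the even- and odd-indexed subsequences converge to values that are negatives of one another, which is precisely the two-line case distinction in the statement once the $x$- and $y$-data are linked through $x_0=-y_0$ as in Theorem \ref{Theorem3}.

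For part (ii) the key is a growth estimate. I would show that $p<q-1$ forces both $\Phi_+<q$ and $|\Phi_-|<q$ (the latter because $p<q-1$ gives $p+q>1$), while $\Phi_++\Phi_-=p>0$ with $\Phi_-<0$ forces $\Phi_+>|\Phi_-|$. Then $W_n/q^n=\frac{(\Phi_+/q)^n-(\Phi_-/q)^n}{\sqrt{p^2+4q}}\to0^+$, i.e. $q^n/W_n\to+\infty$. Feeding this into $\prod_{i=0}^n x_i=\frac{x_0\,(q^n/W_n)}{(W_{n+1}/W_n)+x_0}$ and invoking \eqref{infinityratio} shows the $x$-product tends to $\pm\infty$, while the extra alternating factor makes the $y$-product oscillate unboundedly; both limits therefore fail to exist. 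The main obstacle I anticipate is the sign bookkeeping in part (i): obtaining the telescoping product for $y$ with the correct signs and then matching parities in the case split, since the alternating factor $(-q)^n$ must be tracked carefully against the relation $x_0=-y_0$. The estimate $\max\{\Phi_+,|\Phi_-|\}<q$ in part (ii) is routine, relying only on the same elementary inequality manipulations already used in Theorem \ref{theorem8}.
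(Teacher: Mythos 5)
Your proof is correct, and for equation \eqref{problem1} and for part (ii) it is essentially the paper's own argument: the paper likewise starts from the closed form $\prod_{i=0}^{n}x_i=q^{n}x_0/(W_{n+1}+x_0W_n)$ obtained in the proof of Theorem \ref{theorem8}; in case (i) it evaluates $\lim_{n\rightarrow\infty}(\Phi_+/q)^n=1$ and $\lim_{n\rightarrow\infty}(\Phi_-/q)^n=0$ (your collapsed Binet formula $W_n=(q^{n}-(-1)^{n})/(q+1)$ just makes $\Phi_+=q$, $\Phi_-=-1$ explicit), and in case (ii) it proves $0<\Phi_+/q<1$ and $-1<\Phi_-/q<0$, which is exactly your estimate $\max\{\Phi_+,|\Phi_-|\}<q$, after which the vanishing denominator forces divergence.

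The genuine difference is that you actually prove the claims about $\{y_n\}$, which the paper waves off as ``similar'' and omits; this is also where you (correctly) supply the missing hypothesis $y_0=-x_0$, without which the displayed equality in (i) makes no sense. Your telescoped formula $\prod_{i=0}^{n}y_i=(-q)^{n}y_0/(W_{n+1}-y_0W_n)$ is right, and the alternating factor $(-q)^n$ is exactly the source of the two-line case split, since the $y$-product has no limit, only two subsequential limits. But push your computation one line further: with $y_0=-x_0$ it gives even-indexed products tending to $-L$ and odd-indexed products tending to $+L$, where $L=x_0(q+1)/(q+x_0)$ is the $x$-limit --- the \emph{opposite} parity of what the theorem (and the paper's follow-up example) asserts, so your closing claim that this matches the statement ``precisely'' is inaccurate. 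The discrepancy is the paper's error, not yours: for $p=1$, $q=2$, $y_0=-9$ the partial products are $-9,\ 1.8,\ -3,\ 2.25,\ -18/7,\ 2.4,\ \ldots$, so the even-indexed ones are negative and tend to $-27/11$, contradicting the paper's claim that they tend to $+27/11$. In short, your more careful bookkeeping corrects the published statement, which in any case should be phrased in terms of the two subsequential limits rather than a case split on the parity of the dummy variable $n$ inside $\lim_{n\rightarrow\infty}$.
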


\begin{proof}
Again, we only prove the result for equation \eqref{problem1} and omit the proof for the corresponding result for equation \eqref{problem2} since they are imilar.
First, we assume that $p=q-1$. Hence, $\Phi_+/q=1$ and $\Phi_-/q=-1/q$. 
Now, from the proof of Theorem \ref{theorem8}, we have seen that
\[
\lim_{n\rightarrow \infty} \prod_{i=0}^nx_i 
=\frac{x_0\sqrt{p^2+4q}}{\Phi_++x_0} \left[\lim_{n\rightarrow \infty}\left(\frac{\Phi_+}{q}\right)^n-\lim_{n\rightarrow \infty}\left(\frac{\Phi_-}{q}\right)^n\right]^{-1}.
\]
Thus, $\lim_{n\rightarrow \infty} \prod_{i=0}^nx_i =(x_0\sqrt{p^2+4q})/(\Phi_++x_0)$, proving (i). 
On the other hand, if $p<q-1$ then
\[
0<\frac{\Phi_+}{q} 
< \frac{(q-1)+\sqrt{(q-1)^2+4q}}{2q} < 1.
\]
Furthermore, $p<q-1$ implies
\[
-1
<
\frac{p}{q}-1
=\frac{2(p-q)}{2q}
<\frac{(p-q)-1}{2q}
<\frac{p+\sqrt{(q-1)^2+4q}}{2q}
<\frac{\Phi_-}{q} <0.
\]
So it follows that $\lim_{n\rightarrow\infty} (\Phi_+/q)^n=0$ and $\lim_{n\rightarrow\infty} (\Phi_-/q)^n=0.$ Therefore, the limit $\lim_{n\rightarrow \infty} \prod_{i=0}^nx_i$ diverges. 
This concludes statement (ii), completing the proof of the theorem.
\end{proof}

\begin{example}
Consider the nonlinear difference equation $x_{n+1} = 2/(1+x_n)$ whose solutions are associated to Jacobsthal numbers.
Recall that $J_n=W_n(0,1;1,2)$. 
Hence, $p=1=2-1=q-1$. 
Furthermore, $\Phi_+=2$ and $\Phi_-=-1$. 
It follows from Theorem \ref{theorem9} that $\lim_{n\rightarrow \infty} \prod_{i=0}^n x_i = 3x_0/(2+x_0)$,
i.e., for $x_0=9$, we have $\lim_{n\rightarrow\infty}\prod_{i=0}^n x_i = 27/11 = 2.454552264$ (approx.).
Also, in reference to Theorem \ref{theorem9}, we see that 
\[
\lim_{n\rightarrow\infty}\prod_{i=0}^n y_i = \begin{cases} \ \ \frac{27}{11}, & \text{when $n$ is even},\\[1em] -\frac{27}{11}, & \text{when $n$ is odd}, \end{cases}
\]
where $\{y_n\}_{n=1}^{\infty}$ is the solution of the difference equation $y_{n+1} = 2/(-1+y_n)$ with initial condition $y_0=-x_0=-9$, refer to Figure \ref{fig4} for the plots. 
\begin{figure}[h!]
   \centering
    \scalebox{0.4}{\includegraphics{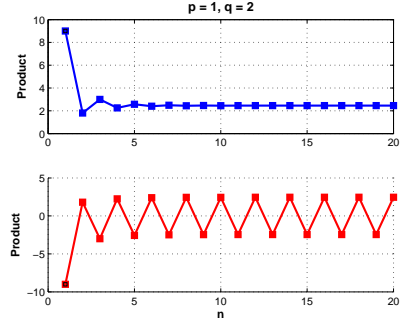}}
    \caption{The product of solutions of $x_{n+1}=\frac{2}{1+x_n}$ and $y_{n+1}=\frac{2}{-1+y_n}$ from $n=0$ to 20 are shown in the above figure (respectively, upper and lower plot).}
    \label{fig4}
\end{figure}
\end{example}

\begin{example}
As an example for Theorem \ref{theorem9}-(ii), we consider the two nonlinear difference equations $x_{n+1} = 2/(1/2+x_n)$ and $y_{n+1} = 2/(-1/2+y_n)$ with the same initial conditions as in the previous example.
The respective product of their solutions diverges as $n \rightarrow \infty$ and these are shown in Figure \ref{fig5}.
\begin{figure}[h!]
   \centering
    \scalebox{0.4}{\includegraphics{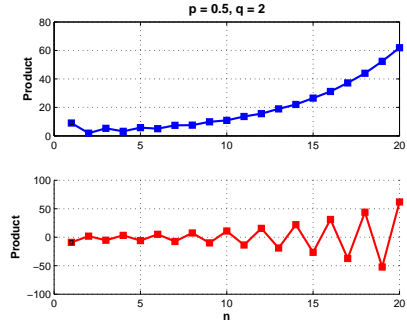}}
    \caption{The product of solutions of $x_{n+1}=\frac{2}{1/2+x_n}$ and $y_{n+1}=\frac{2}{-1/2+y_n}$ from $n=0$ to 20 are shown in the above figure (respectively, upper and lower plot).}
    \label{fig5}
\end{figure}
\end{example}

\begin{theorem}\label{theorem12}
Consider equation \eqref{problem1} with initial condition $x_0=qW_k/W_{k+1}$.
Then, for $n,k \in \mathbb{N}$ and $n>k+1$, we have
$W_n = q^{n-(k+1)}W_{k+1}/\prod_{i=1}^{n-(k+1)}x_i$. 
\end{theorem}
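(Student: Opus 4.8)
The plan is to feed the specialized initial condition into the closed-form solution from Theorem~\ref{Theorem1}, reduce each iterate to a clean ratio of Horadam numbers using Lemma~\ref{lemma1}(i), and then collapse the resulting telescoping product. Since $x_0 = qW_k/W_{k+1}$ is positive (recall $k \in \mathbb{N}$, so $W_k \geq 1$), it is distinct from every $-W_{m+1}/W_m$, and we take it to avoid $1/\Phi_\pm$; hence $x_0 \in X(p,q)$ and Theorem~\ref{Theorem1}(i) applies, giving
\[
x_i = \frac{qW_i + x_0\,qW_{i-1}}{W_{i+1} + x_0 W_i}, \qquad i \in \mathbb{N}.
\]
Substituting $x_0 = qW_k/W_{k+1}$ and clearing the common factor $1/W_{k+1}$ from top and bottom would yield
\[
x_i = \frac{q\,\bigl(W_{k+1}W_i + qW_k W_{i-1}\bigr)}{W_{k+1}W_{i+1} + qW_k W_i}.
\]

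The decisive step is to recognize both parenthesized sums as instances of identity~(i) of Lemma~\ref{lemma1}, namely $W_n = W_{k+1}W_{n-k} + qW_k W_{n-(k+1)}$. Reading the identity with $n = i+k$ collapses the numerator bracket to $W_{i+k}$, and reading it with $n = i+k+1$ collapses the denominator to $W_{i+k+1}$, producing the shifted form
\[
x_i = \frac{q\,W_{i+k}}{W_{i+k+1}}.
\]
I would flag the one boundary subtlety here: Lemma~\ref{lemma1}(i) is stated for $n > k+1$, so the numerator identity covers $i \geq 2$ directly, while the case $i=1$ must be checked by hand — but there $W_{k+1}W_1 + qW_k W_0 = W_{k+1} = W_{k+1}$ holds trivially from $W_0=0,\,W_1=1$, so the shifted form is valid for all $i \geq 1$.

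With $x_i = qW_{i+k}/W_{i+k+1}$ established, I would set $m := n-(k+1)$ (the hypothesis $n > k+1$ ensures $m \geq 1$, so the product is nonempty and all indices $i+k$ are positive) and observe that the product telescopes:
\[
\prod_{i=1}^{m} x_i = q^{m}\prod_{i=1}^{m}\frac{W_{i+k}}{W_{i+k+1}} = q^{m}\,\frac{W_{k+1}}{W_{m+k+1}} = q^{\,n-(k+1)}\,\frac{W_{k+1}}{W_n},
\]
where the final equality uses $m+k+1 = n$. Solving for $W_n$ gives exactly $W_n = q^{\,n-(k+1)}W_{k+1}\big/\prod_{i=1}^{n-(k+1)}x_i$, as claimed. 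I expect the only genuine obstacle to be the index bookkeeping when invoking Lemma~\ref{lemma1}(i) — correctly matching $n-k$ against $i$ in the numerator and against $i+1$ in the denominator, and confirming the uniform shift by $k$; once the clean form $x_i = qW_{i+k}/W_{i+k+1}$ is in place, the telescoping and the concluding algebra are entirely routine.
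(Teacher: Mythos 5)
Your proposal is correct, and it rests on the same two ingredients as the paper's own proof---the closed form of Theorem \ref{Theorem1} and identity (i) of Lemma \ref{lemma1}---but it organizes them differently. The paper never simplifies the individual iterates: it starts from the already-telescoped product formula $\prod_{i=0}^{m}x_i=q^{m}x_0/(W_{m+1}+x_0W_m)$ obtained in the proof of Theorem \ref{theorem8}, cancels $x_0$, substitutes $x_0=qW_k/W_{k+1}$, and applies Lemma \ref{lemma1}(i) exactly once, to the aggregate denominator $W_{n-k}W_{k+1}+qW_kW_{n-(k+1)}$; since $m=n-(k+1)$, the hypothesis $n>k+1$ is precisely the identity's stated range, so no boundary case arises. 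You instead substitute first, apply the identity twice per term to get $x_i=qW_{i+k}/W_{i+k+1}$, and only then telescope. That costs you the $i=1$ boundary check (which you correctly patch by hand) but buys a structural fact the paper never makes explicit: starting from $qW_k/W_{k+1}$, the orbit simply walks along the Horadam ratios. Two small caveats on your setup. First, for real $p\in(0,1)$ one only has $W_k>0$, not $W_k\geq 1$; positivity is all you actually need. Second, positivity of $x_0$ does not by itself exclude $x_0=1/\Phi_+$ (for instance $p=q=1/2$, $k=1$ gives $qW_1/W_2=1=1/\Phi_+$), so ``we take it to avoid $1/\Phi_\pm$'' is an extra assumption as written; it is harmless, though, because the inductive proof of Theorem \ref{Theorem1} only requires the denominators $W_{n+1}+x_0W_n$ to be nonzero, which $x_0>0$ guarantees, and the paper's own proof of this theorem glosses over the very same point.
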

\begin{proof}
Consider the product $x_0\cdot\prod_{i=1}^{n-(k+1)} x_i = q^{n-(k+1)}x_0/(W_{n-k}+x_0W_{n-(k+1)})$.
Hence,
\begin{align*}
\prod_{i=1}^{n-(k+1)} x_i
&=\frac{q^{n-(k+1)}}{W_{n-k}+x_0W_{n-(k+1)}}
=\frac{q^{n-(k+1)}}{W_{n-k}+\frac{qW_k}{W_{k+1}}\cdot W_{n-(k+1)}}\\
&=\frac{q^{n-(k+1)}W_{k+1}}{W_{n-k}W_{k+1}+qW_k W_{n-(k+1)}}.
\end{align*}
By identity (i) of Lemma \ref{lemma1}, we obtain the desired result.
\end{proof}

We provide the following example for the previous theorem.

\begin{example} 
Consider, for instance, the nonlinear difference equation $x_{n+1}=1/(1+x_n)$ studied by Tollu et al. in \cite{Tollu}. 
If we let $n=15, k=1$ and $x_0=(1)(F_1/F_2)=1$, then we have 
$x_1\cdot x_2\cdot\cdots \cdot x_{13} = F_2/F_{15}=1/610$.
\end{example}

We also have the following theorems.

\begin{theorem}
Consider equation \eqref{problem1} with initial condition $x_0=-W_{n+r+1}/W_{n+r}$. 
Then, for all $n,r\in \mathbb{N}$, we have $(-1)^n\prod_{i=1}^n x_i = W_{n+r}/W_r$.
Furthermore, we have the limit $\lim_{r\rightarrow\infty}(-1)^n\prod_{i=1}^n x_i = \Phi_+^r$.
\end{theorem}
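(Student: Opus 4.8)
The plan is to reduce everything to the telescoping structure already implicit in Theorem \ref{Theorem1} together with d'Ocagne's identity. First I would write the closed form of Theorem \ref{Theorem1}(i) as
\[
x_m = q\,\frac{W_m + x_0 W_{m-1}}{W_{m+1} + x_0 W_m}, \qquad m \geq 1,
\]
and abbreviate $D_m := W_{m+1} + x_0 W_m$, so that the numerator of $x_m$ is exactly $qD_{m-1}$ and $x_m = q\,D_{m-1}/D_m$. Taking the product over $m = 1,\dots,n$ telescopes, leaving
\[
\prod_{i=1}^n x_i = q^n\,\frac{D_0}{D_n} = \frac{q^n}{W_{n+1} + x_0 W_n},
\]
since $D_0 = W_1 + x_0 W_0 = 1$. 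This is precisely the identity extracted in the proof of Theorem \ref{theorem8}, so I would lift it from there rather than rederive it.

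The decisive step is the substitution $x_0 = -W_{n+r+1}/W_{n+r}$, after which
\[
W_{n+1} + x_0 W_n = \frac{W_{n+1}W_{n+r} - W_{n+r+1}W_n}{W_{n+r}}.
\]
Here I would invoke d'Ocagne's identity, Lemma \ref{lemma1}(iv), in the form $W_{n+r}W_{n+1} - W_{n+r+1}W_n = (-1)^n q^n W_r$, which collapses the numerator to $(-1)^n q^n W_r$. Combining the two displays yields
\[
\prod_{i=1}^n x_i = \frac{q^n W_{n+r}}{(-1)^n q^n W_r} = (-1)^n\,\frac{W_{n+r}}{W_r},
\]
and multiplying through by $(-1)^n$ gives $(-1)^n \prod_{i=1}^n x_i = W_{n+r}/W_r$, as claimed. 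For the limit I would apply \eqref{infinityratio} with the roles of the two indices exchanged, i.e.\ letting $r\to\infty$ with the offset $n$ held fixed, so that $W_{n+r}/W_r \to \Phi_+^n$; I note in passing that the exponent $\Phi_+^r$ printed in the statement appears to be a typographical slip for $\Phi_+^n$.

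The one genuine subtlety, and the step I would watch most carefully, is that $x_0 = -W_{n+r+1}/W_{n+r}$ is itself a member of the forbidden family $\{-W_{m+1}/W_m\}_{m\geq 1}$ excluded in Theorem \ref{Theorem1}, so the full infinite solution is not everywhere defined. What rescues the argument is that $D_k = W_{k+1} + x_0 W_k$ vanishes (again by d'Ocagne, since its numerator equals $(-1)^k q^k W_{n+r-k}$) \emph{precisely} at $k = n+r$, and because $r \geq 1$ this singular index lies strictly beyond the range $1 \leq i \leq n$ of the product. Hence $D_1,\dots,D_n$ are all nonzero, each $x_i$ is well defined, and the telescoping is legitimate. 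I would therefore justify the closed form only up to index $n$, re-verifying it by a short induction that remains valid as long as the $D_m$ stay nonzero, rather than asserting it for all $m$. Beyond this careful bookkeeping of signs, indices, and the location of the singularity, the proof is a direct computation.
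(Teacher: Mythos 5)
Your proposal is correct and takes essentially the same route as the paper: telescoping the closed form of Theorem \ref{Theorem1} to $\prod_{i=1}^n x_i = q^n/(W_{n+1}+x_0 W_n)$ and then collapsing the denominator with d'Ocagne's identity, Lemma \ref{lemma1}(iv). The two refinements you add --- reading $\Phi_+^r$ as a typographical slip for $\Phi_+^n$ (the paper itself prints an $r$-dependent value as the result of an $r\rightarrow\infty$ limit), and checking that the singularity forced by $x_0\in\{-W_{m+1}/W_m\}_{m\geq1}$ occurs only at index $n+r>n$, so that $x_1,\ldots,x_n$ are all well defined --- are both sound and address points the paper passes over in silence.
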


\begin{proof}
We follow the proof of Theorem \ref{theorem12}, that is, we consider the following product
\[
x_0\cdot\prod_{i=1}^n x_i=\frac{q^nx_0}{W_{n+1}+x_0W_n},
\]
with initial condition $x_0=-W_{n+r+1}/W_{n+r}$ where $n,r \in \mathbb{N}$.
Hence, using d\rq{}Ocagne\rq{}s identity, we have
\begin{eqnarray*}
\prod_{i=1}^n x_i=
\frac{q^nW_{n+r}}{W_{n+r}W_{n+1}-W_{n+r+1} W_n}
=\frac{W_{n+r}}{(-1)^nW_r}.
\end{eqnarray*}
Multiplying both sides by $(-1)^n$ and letting $r\rightarrow\infty$, we obtain the limit
$\lim_{r\rightarrow\infty}(-1)^n\prod_{i=1}^n x_i = \lim_{r\rightarrow\infty}W_{n+r}/W_r=\Phi_+^r$,
proving the theorem.
\end{proof}

The next theorem is our final result for this section. 

\begin{theorem}
Let $r\in\mathbb{N}$ and $x_0=-W_{r+1}/W_r$ be the initial condition of \eqref{problem1}, where $W_r$ is the $r^{th}$ Horadam number. Then, we have
$(-1)^{r+1}q^{r-n}\prod_{i=1}^n x_i = W_r/W_{n-r}$.  
\end{theorem}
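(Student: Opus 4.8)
The plan is to start from the telescoping product formula that already underlies the proofs of Theorems \ref{theorem8} and \ref{theorem12}. By Theorem \ref{Theorem1}(i), each factor can be written as $x_i = q(W_i + x_0 W_{i-1})/(W_{i+1} + x_0 W_i)$, so in the finite product from $i=1$ to $n$ the denominator of the $i$-th factor cancels the numerator of the $(i+1)$-th factor. Using $W_0 = 0$ and $W_1 = 1$, this telescoping leaves
\[
\prod_{i=1}^n x_i = \frac{q^n (W_1 + x_0 W_0)}{W_{n+1} + x_0 W_n} = \frac{q^n}{W_{n+1} + x_0 W_n}.
\]

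Next I would substitute the prescribed initial condition $x_0 = -W_{r+1}/W_r$ and clear the resulting fraction, obtaining $\prod_{i=1}^n x_i = q^n W_r / (W_{n+1} W_r - W_{r+1} W_n)$. Everything now hinges on simplifying the denominator $W_{n+1} W_r - W_{r+1} W_n$, and this is the step I expect to be the crux. The natural tool is Johnson's identity, Lemma \ref{lemma1}(v): since the four indices $(n+1, r, r+1, n)$ satisfy $(n+1) + r = (r+1) + n$, the identity applies, and I would choose the shift exactly equal to $r$. This choice is what makes the argument close, because shifting by $r$ sends one index to $0$ and one to $1$, so the vanishing of $W_0$ annihilates an entire term:
\[
W_{n+1} W_r - W_{r+1} W_n = (-q)^r\bigl(W_{n+1-r} W_0 - W_1 W_{n-r}\bigr) = -(-q)^r W_{n-r}.
\]

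Finally I would rewrite $-(-q)^r = (-1)^{r+1} q^r$ and substitute back, giving $\prod_{i=1}^n x_i = (-1)^{r+1} q^{n-r} W_r / W_{n-r}$. Multiplying both sides by $(-1)^{r+1} q^{r-n}$ collapses the signs through $(-1)^{2(r+1)} = 1$ and the powers through $q^{(r-n)+(n-r)} = q^0$, delivering the claimed identity $(-1)^{r+1} q^{r-n} \prod_{i=1}^n x_i = W_r / W_{n-r}$. The only genuine subtlety is the notational clash between the shift variable in Johnson's identity (also written $r$ in Lemma \ref{lemma1}(v)) and the parameter $r$ of the theorem; because I deliberately set the shift equal to the theorem's $r$, the two coincide and the reduction is consistent, but one must keep the roles straight when reading off the shifted indices. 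The remaining sign and exponent bookkeeping is routine, so no further obstacle is anticipated.
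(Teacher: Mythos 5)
Your proposal is correct and follows essentially the same route as the paper: reduce to the product formula $\prod_{i=1}^n x_i = q^n/(W_{n+1}+x_0W_n)$, substitute $x_0 = -W_{r+1}/W_r$, and collapse the denominator $W_rW_{n+1}-W_{r+1}W_n$ via Johnson's identity with shift exactly $r$, so that $W_0=0$ kills one term and leaves $(-1)^{r+1}q^r W_{n-r}$. The only cosmetic difference is that you derive the telescoping product explicitly, while the paper simply reuses it from the proof of Theorem \ref{theorem12}.
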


\begin{proof}
Again, we consider the product $\prod_{i=1}^n x_i =q^n/(W_{n+1}+x_0W_n)$
with initial value $x_0=-W_{r+1}/W_r$, where $r\in \mathbb{N}$. 
Then, we have
\begin{align*}
\prod_{i=1}^n x_i &=\frac{q^n}{W_{n+1}-\frac{W_{r+1}}{W_r}W_n}
=\frac{q^nW_r}{W_rW_{n+1}-W_{r+1}W_n}\\
&= \frac{q^nW_r}{(-q)^r(W_{r-r}W_{(n+1)-r}-W_{(r+1)-r}W_{n-r})}
=\frac{q^nW_r}{(-1)^{r+1}q^rW_{n-r}}.
\end{align*}
Rearranging the latter equation, we get $(-1)^{r+1}q^{r-n}\prod_{i=1}^n x_i =  W_r/W_{n-r}$, which is desired.
\end{proof}

In the next section we tackle the case when $\nu>1$ in equations \eqref{problem1} and \eqref{problem2}. 

\section{The case $\nu >1$}

We first introduce some basic definitions and some theorems that we need in the sequel. 
Let $I$ be some interval of real numbers and let $f: I^{k+1} \rightarrow I$ be a continuously differentiable function. Then, for every set of initial conditions $x_{-k},x_{-k+1},\ldots,x_0 \in I$, the difference equation
\begin{equation}\label{function}
x_{n+1}=f(x_{n},x_{n-1},\ldots,x_{n-k}),\quad n=0,1,\ldots
\end{equation}
has a unique solution $\{x_n\}_{n=-k}^{\infty}$ (cf. \cite{kocic}).

\begin{definition}[Stability]
\begin{enumerate}
\item[(i)] The equilibrium point $\bar{x}$ of \eqref{function} is locally stable if, for every $\epsilon>0$, there exists $\delta$ such that for all $x_{-k}$, $x_{-k+1}$, $\ldots,x_0 \in I$ with 
$\sum_{i=-k}^0|x_{-i}-\bar{x}| < \delta$ we have $|x_n-\bar{x}|<\epsilon$ for all $n\geq -k$.
\item[(ii)] The equilibrium point $\bar{x}$ of $\eqref{function}$ is locally asymptotically stable if $\bar{x}$ is locally stable solution of \eqref{function} and there exists $\gamma > 0$, 
such that for all $x_{-k}$, $x_{-k+1}$, $\ldots,x_0 \in I$ with 
$ \sum_{i=-k}^0|x_{-i}-\bar{x}|  <\delta$ we have $\lim_{n\rightarrow\infty} x_n =\bar{x}$.
\item[(iii)] The equilibrium point $\bar{x}$ of $\eqref{function}$ is a global attractor if, for all $x_{-k}$, $x_{-k+1}$, $\ldots,x_0 \in I$, 
we have  $\lim_{n\rightarrow\infty} x_n =\bar{x}$.
\item[(iv)] The equilibrium point $\bar{x}$ of $\eqref{function}$ is a globally asymptotatically stable if $\bar{x}$ is locally stable, and $\bar{x}$ is also a global attractor of \eqref{function}.
\item[(v)] The equilibrium point $\bar{x}$ of $\eqref{function}$ is unstable if $\bar{x}$ is not locally stable.
\end{enumerate}

The linearized equation of \eqref{function} about the equilibrium $\bar{x}$ is the linear difference equation
\[
y_{n+1} = \sum_{i=0}^k \frac{\partial f(\bar{x},\bar{x},\ldots,\bar{x})}{\partial x_{n-i}}y_{n-i}.
\]
\end{definition}

\begin{theorem}[\cite{kulenovic}]\label{theoremA} Assume that $p_i \in \mathbb{R}, i = 0,1,\ldots,k$ and $k \in \{0, 1, 2, \ldots\}$. 
Then, $\sum_{i=1}^k|p_i|<1$ is a sufficient condition for the asymptotic stability of the difference equation:
\[
x_{n+k}+p_1x_{n+k-1}+\ldots+p_kx_{n}=0,\quad n=0,1,\ldots.
\]
\end{theorem}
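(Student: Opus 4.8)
The plan is to reduce the claim to a statement about the location of the roots of the characteristic polynomial and then establish that location by an elementary modulus estimate. First I would recall the standard correspondence for linear, constant-coefficient difference equations: the zero solution of
\[
x_{n+k}+p_1 x_{n+k-1}+\cdots+p_k x_n=0
\]
is asymptotically stable if and only if every root of the associated characteristic polynomial
\[
P(\lambda)=\lambda^k+p_1\lambda^{k-1}+\cdots+p_k
\]
satisfies $|\lambda|<1$ (Schur stability). This equivalence follows from the explicit form of the general solution of the recurrence: each solution is a linear combination of terms $n^s\lambda_j^n$ indexed by the roots $\lambda_j$ and their multiplicities, and such a combination tends to $0$ for every choice of initial data precisely when all $|\lambda_j|<1$. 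Granting this, it suffices to show that the hypothesis $\sum_{i=1}^k|p_i|<1$ forces all roots of $P$ into the open unit disk.

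Next I would establish the root location by contradiction. Suppose $\lambda$ is a root of $P$ with $|\lambda|\ge 1$. From $P(\lambda)=0$ we have $\lambda^k=-(p_1\lambda^{k-1}+\cdots+p_k)$, so the triangle inequality gives
\[
|\lambda|^k\le |p_1|\,|\lambda|^{k-1}+|p_2|\,|\lambda|^{k-2}+\cdots+|p_k|.
\]
Since $|\lambda|\ge 1$, each power obeys $|\lambda|^{k-i}\le|\lambda|^{k-1}$ for $i=1,\dots,k$, whence
\[
|\lambda|^k\le\Big(\sum_{i=1}^k|p_i|\Big)|\lambda|^{k-1}.
\]
Dividing by $|\lambda|^{k-1}>0$ yields $|\lambda|\le\sum_{i=1}^k|p_i|<1$, contradicting $|\lambda|\ge 1$. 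Hence every root lies in $|\lambda|<1$, and asymptotic stability follows from the correspondence recalled above.

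As an alternative, perhaps cleaner route I would note a Rouch\'e-type argument: on the circle $|\lambda|=1$ one has $|\lambda^k|=1$ while $|p_1\lambda^{k-1}+\cdots+p_k|\le\sum_{i=1}^k|p_i|<1=|\lambda^k|$, so $P(\lambda)$ and $\lambda^k$ have the same number of zeros (counted with multiplicity) inside the unit disk, namely $k$; thus all $k$ roots of $P$ lie strictly inside, again giving the conclusion.

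The main obstacle is not the estimate---both the modulus argument and the Rouch\'e argument are short---but rather the invocation of the stability--root-location correspondence, which is the one genuinely nontrivial input. To be fully self-contained one would write out the general solution of the recurrence via the companion-matrix (Jordan) form or via generating functions and verify that the growth of $n^s\lambda_j^n$ is governed solely by $|\lambda_j|$; for the purposes of this paper the correspondence is classical and may simply be cited.
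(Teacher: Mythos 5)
Your proposal is mathematically correct, but note that the paper itself offers no proof of this statement at all: it is quoted verbatim as a known result from the Kulenovi\'c--Ladas reference \cite{kulenovic} and used as a black box to deduce the local stability results in Section 4. So there is no paper argument to compare against; what you have done is supply the standard proof that the cited source (and classical stability theory) provides. Both of your routes are sound: the reduction of asymptotic stability to Schur stability of the characteristic polynomial is the classical correspondence for linear constant-coefficient equations, and the root-location step is clean in either form. In the contradiction argument, the key inequality
\[
|\lambda|^k \le \Bigl(\sum_{i=1}^k |p_i|\Bigr)|\lambda|^{k-1}
\]
is exactly where the hypothesis $\sum_{i=1}^k |p_i| < 1$ enters, and your use of $|\lambda|^{k-i}\le |\lambda|^{k-1}$ is valid precisely because you assumed $|\lambda|\ge 1$; the Rouch\'e variant packages the same estimate on the unit circle. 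One cosmetic remark: the theorem as stated in the paper indexes the coefficients $p_i$ from $i=0$ but the equation and the sum only involve $p_1,\dots,p_k$; this is a typo in the paper's transcription, not something your proof needs to address. If you wanted the argument fully self-contained, the one genuinely nontrivial input --- that solutions are linear combinations of terms $n^s\lambda_j^n$ and decay for all initial data exactly when all $|\lambda_j|<1$ --- is the part you correctly flag as classical; citing it, as you do, is appropriate here.
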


\begin{definition}[Periodicity]
A sequence $\{x_n\}_{n=-k}^{\infty}$ is said to be periodic with period $p$ if $x_{n+p}=x_n$ for all $n \geq -k$.
\end{definition}

\begin{definition}[\cite{grove}]
A solution $\{x_n\}_{n=-k}^{\infty}$ of \eqref{function} is called \emph{eventually periodic with period} $p$ if there exists an integer $N\geq-k$ such that  $\{x_n\}_{n=N}^{\infty}$ is periodic with period $p$; 
that is, $x_{n+p}=x_n$, for all $n \geq N$.
\end{definition}
Now, we are in the position to investigate the case when $\nu >1$.

\subsection{On equation $x_{n+1}=q/(p+x_n^{\nu})$}

We have the following theorems.

\begin{theorem}\label{boundedness1}
Every positive solution of \eqref{problem1} is bounded. 
\end{theorem}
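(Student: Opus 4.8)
The plan is to exploit the simple structure of the recurrence to produce explicit positive upper and lower bounds valid from some index onward, via a short bootstrap. First I would record that positivity of the solution is preserved: if $x_0 > 0$ then, since $p,q\in\mathbb{R}^+$, an easy induction shows that each numerator $q$ and each denominator $p+x_n^{\nu}$ is positive, so $x_n>0$ for every $n\geq 0$. This is the only place the hypothesis ``positive solution'' is used, and it guarantees that every power $x_n^{\nu}$ is strictly positive.

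Next I would read the upper bound directly off \eqref{problem1}. Because $x_n>0$ forces $x_n^{\nu}>0$, the denominator satisfies $p+x_n^{\nu}>p$, whence
\[
x_{n+1}=\frac{q}{p+x_n^{\nu}}<\frac{q}{p}
\]
for every $n\geq 0$. Thus $x_n<q/p$ for all $n\geq 1$, which is the required upper bound.

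The key step is to feed this upper bound back into the recurrence to extract a positive lower bound. For $n\geq 1$ we now have $x_n<q/p$, hence $x_n^{\nu}<(q/p)^{\nu}$ and $p+x_n^{\nu}<p+(q/p)^{\nu}$. Substituting into \eqref{problem1} gives
\[
x_{n+1}=\frac{q}{p+x_n^{\nu}}>\frac{q}{\,p+(q/p)^{\nu}\,}
\]
for all $n\geq 1$, so $x_n>q/\bigl(p+(q/p)^{\nu}\bigr)=:L>0$ for all $n\geq 2$. Combining the two estimates yields $L<x_n<q/p$ for all $n\geq 2$, and since the finitely many initial terms $x_0,x_1$ are already finite and positive, the whole solution is bounded between two positive constants.

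I do not anticipate a genuine obstacle here; the argument is elementary. The only point requiring a little care is the order of the two estimates: the lower bound cannot be read off immediately, but only after the uniform upper bound $q/p$ has been established and substituted back into the equation. One should also observe that the exponent $\nu\in\mathbb{N}$ plays no special role beyond preserving positivity, so exactly the same bootstrap works for every integer $\nu>0$, which is precisely the generality claimed in the statement.
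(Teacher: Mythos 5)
Your proof is correct and follows essentially the same bootstrap as the paper: read the upper bound $q/p$ off positivity of $x_n^{\nu}$, then substitute it back into the recurrence to get the lower bound $q/\bigl(p+(q/p)^{\nu}\bigr)$. Your only addition is the careful index bookkeeping (bounds valid from $n\geq 1$ and $n\geq 2$ onward), which the paper's proof glosses over but which does not change the argument.
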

\begin{proof}
Let $\{x_n\}_{n=0}^{\infty}$ be a solution to \eqref{problem1}. 
Then, $ x_{n+1} = q/(p+x_n^{\nu}) \leq q/p$. 
Hence, $x_n^{\nu} \leq (q/p)^{\nu}$ which implies that $x_{n+1} = q/(p+x_n^{\nu}) \geq q/(p+(q/p)^{\nu})$. 
Thus, $q/(p+(q/p)^{\nu}) \leq x_n \leq q/p$.
\end{proof}

\begin{theorem}\label{fixedpoints1}
Let $\bar{x}$ be an equilibrium point of equation \eqref{problem1}. Then, the following statements are true:
\begin{enumerate}
\item[\rm (i)] if $q<p+1$ then \eqref{problem1} has a unique positive equilibrium $\bar{x}$ and $\bar{x} \in (0,1),$
\item[\rm (ii)] if $q=p+1$ then $\bar{x}=1$ is a unique positive equilibrium of \eqref{problem1},
\item[\rm (iii)] if $q>p+1$ then \eqref{problem1} has a unique positive equilibrium $\bar{x}$ and $\bar{x} >1.$
\end{enumerate}
\end{theorem}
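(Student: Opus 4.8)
The plan is to reduce the equilibrium condition to a single polynomial equation and then exploit monotonicity. An equilibrium $\bar{x}$ of \eqref{problem1} is obtained by setting $x_{n+1}=x_n=\bar{x}$, so it satisfies $\bar{x}=q/(p+\bar{x}^{\nu})$; clearing the denominator turns this into $\bar{x}^{\nu+1}+p\bar{x}-q=0$. I would therefore introduce the auxiliary function $g(x)=x^{\nu+1}+px-q$ on $[0,\infty)$ and study its positive roots, since these are exactly the positive equilibria of \eqref{problem1}.

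First I would settle uniqueness and existence simultaneously. Differentiating gives $g'(x)=(\nu+1)x^{\nu}+p$, which is strictly positive for every $x>0$ because $p\in\mathbb{R}^+$ and $\nu\in\mathbb{N}$. Hence $g$ is strictly increasing on $(0,\infty)$ and can have at most one positive root. Since $g(0)=-q<0$ while $g(x)\to+\infty$ as $x\to\infty$, the intermediate value theorem supplies at least one positive root; combined with strict monotonicity this root is unique. Thus in all three cases equation \eqref{problem1} has exactly one positive equilibrium $\bar{x}$, which already covers the uniqueness claim in each part.

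To locate $\bar{x}$ relative to $1$, I would simply evaluate $g(1)=1+p-q=(p+1)-q$ and read off the sign. In case (i), $q<p+1$ gives $g(1)>0$, and because $g(0)<0<g(1)$ with $g$ increasing, the unique root lies in $(0,1)$. In case (ii), $q=p+1$ gives $g(1)=0$, so by uniqueness $\bar{x}=1$. In case (iii), $q>p+1$ gives $g(1)<0$, and since $g$ increases without bound past $x=1$, the root must satisfy $\bar{x}>1$. I do not anticipate any genuine obstacle here: the entire argument is elementary once the strict positivity of $g'$ is noted, and the sign of $g(1)$ cleanly trichotomizes the three cases. The only point warranting a line of care is the reduction step, where multiplying through by $p+\bar{x}^{\nu}>0$ must be justified as reversible, so that positive roots of $g$ correspond precisely to positive equilibria and no spurious solutions are introduced.
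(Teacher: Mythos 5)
Your proposal is correct, and its first half is exactly the paper's argument: the paper also introduces $F(x)=x^{\nu+1}+px-q$, notes $F'(x)=(\nu+1)x^{\nu}+p>0$ on $(0,\infty)$, and combines $F(0)=-q<0$ with $F(x)\to+\infty$ to get a unique positive equilibrium. Where you diverge is in placing that root relative to $1$. You do it in one stroke by evaluating $g(1)=(p+1)-q$ and reading off its sign against the strict monotonicity of $g$, which settles all three cases uniformly. The paper instead rewrites the equilibrium equation as $\bar{x}\left(1+\frac{1}{p}\bar{x}^{\nu}\right)=\frac{q}{p}$ and argues case (i) by contradiction (assuming $\bar{x}\ge 1$ and deriving a chain of inequalities that violates $q<p+1$), case (ii) by observing $\bar{x}=1$ works and exhibiting the factorization of $x^{\nu+1}+px-q$ with the factor $(x-1)$, and case (iii) by a direct inequality. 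Your sign-of-$g(1)$ trichotomy is shorter, avoids the case-by-case inequality manipulation, and makes the uniqueness and location claims flow from a single monotonicity fact; the paper's route is more laborious but its factorization in case (ii) gives a small piece of extra algebraic information. Your closing remark about reversibility of clearing the denominator is exactly the right point of care, and it holds since $p+\bar{x}^{\nu}>0$ for $\bar{x}\ge 0$ and $p\in\mathbb{R}^+$.
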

\begin{proof}
Let $\bar{x}$ be an equilibrium of \eqref{problem1} and consider the function $F(x) = x^{\nu+1}+px-q$.
We first show that \eqref{problem1} has a unique positive equilibrium for any $p,q \in \mathbb{R}^+$.
We have $F\rq{}(x)=(\nu+1)x^{\nu}+p$. 
Then, $F\rq{}(x)=0$ if and only if $x=\left(-p/(\nu+1)\right)^{1/\nu}$.
It follows that $F\rq{}(x)>0$ and $F(x)$ is increasing in $(0,\infty)$. 
Moreover, $F(0)=-q<0$ and $\lim_{x\rightarrow +\infty} F(x)= +\infty$.
Thus, for any $p,q\in\mathbb{R}^+$, \eqref{problem1} has unique equilibrium in $(0,\infty)$. 
Now,
\[
\bar{x} = \frac{q}{p+\bar{x}^{\nu}}
\quad\Longleftrightarrow\quad
\bar{x}\left(1+\frac{1}{p}\bar{x}^{\nu}\right)=\frac{q}{p}.
\]
Let $q<p+1$. Then, 
\[
\bar{x}\left(1+\frac{1}{p}\bar{x}^{\nu}\right)< 1+\frac{1}{p}.
\]
Suppose $\bar{x}\geq 1$. Then, 
\begin{eqnarray*}
	\bar{x}\geq 1 
		&\Longleftrightarrow& \bar{x}^{\nu} \geq 1, \quad\quad \text{for all $\nu >1$}\\
		&\Longleftrightarrow& \frac{1}{p}\bar{x}^{\nu} \geq \frac{1}{p}, \quad \text{for all $p \in \mathbb{R}^+$}\\
		&\Longleftrightarrow& 1+\frac{1}{p}\bar{x}^{\nu} \geq 1+\frac{1}{p}\\
		&\Longleftrightarrow& \bar{x}\left( 1+\frac{1}{p}\bar{x}^{\nu} \right)\geq 1+\frac{1}{p}, \quad \text{for all $\bar{x}\geq 1$},
 \end{eqnarray*} 
 a contradiction. Thus, $\bar{x}<1$. 
If $q=p+1$, then $\bar{x}\left(p+\bar{x}^{\nu}\right)= p+1$. 
Obviously, $\bar{x}=1$.  
In fact, for $q=p+1,$ the polynomial $x^{\nu+1}+p x- q$ can be factored as $(x-1)(p+1+x+x^2+\ldots+x^{\nu})$, which also shows that $\bar{x}^{\nu+1}+p \bar{x}- q=0$ has a solution
$\bar{x}=1.$
Lastly, if $q>p+1$. 
Then, $\bar{x}\left(p+\bar{x}^{\nu}\right)>p+1$, showing that $\bar{x}>1$. 
This proves the theorem.
\end{proof}

\begin{theorem}\label{oscillation}
Let $q=p+1$ and $\{x_n\}_{n=0}^{\infty}$ be a positive solution of equation \eqref{problem1}, then $x_n$ oscillates at $\bar{x}=1$.
\end{theorem}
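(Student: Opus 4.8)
The plan is to exploit the monotonicity of the map driving the recursion \eqref{problem1} together with the normalization $q = p+1$, which by Theorem \ref{fixedpoints1}-(ii) fixes the equilibrium at $\bar{x}=1$. Writing $g(x) = q/(p+x^{\nu})$, note that $g$ is strictly decreasing on $(0,\infty)$, since increasing $x$ increases $x^{\nu}$ and hence the denominator. For a strictly decreasing driving map, fixed-point iteration is expected to produce terms that alternate from one side of the fixed point to the other, and the heart of the proof will be to make this straddling precise: whenever a term exceeds $1$ the next term falls below $1$, and conversely. This sign-alternation about $\bar{x}=1$ is exactly the oscillatory behavior asserted.

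First I would establish the straddling step directly from $q = p+1$. Suppose $x_n > 1$; then $x_n^{\nu} > 1$, so $p + x_n^{\nu} > p+1 = q$, whence $x_{n+1} = q/(p+x_n^{\nu}) < q/q = 1$. Symmetrically, if $0 < x_n < 1$ then $x_n^{\nu} < 1$, so $p + x_n^{\nu} < p+1 = q$ and therefore $x_{n+1} = q/(p+x_n^{\nu}) > 1$. Thus each term lands strictly on the opposite side of $\bar{x}=1$ from its predecessor, provided no term actually equals $1$.

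Next I would dispose of the degenerate possibility. If $x_0 = 1$ then $x_n \equiv 1$ and the solution is the constant equilibrium, so I would assume $x_0 \neq 1$. Observe that $x_{n+1} = 1$ forces $p + x_n^{\nu} = q = p+1$, i.e. $x_n^{\nu} = 1$, hence $x_n = 1$ for a positive term; by induction no term can equal $1$ once $x_0 \neq 1$. Consequently the strict inequalities above apply at every step, so the sequence $\{x_n - 1\}$ changes sign at each index and is neither eventually positive nor eventually negative. This is precisely oscillation about $\bar{x}=1$, completing the argument.

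I do not expect a serious obstacle here; the only point requiring care is the exclusion (or explicit mention) of the constant solution $x_0 = 1$, which does not oscillate and must be set aside before genuine oscillation is asserted. The argument uses only the monotonicity of $g$ and the elementary equivalence that, for positive $x$, one has $x^{\nu} > 1$ exactly when $x > 1$ (and $x^{\nu} < 1$ exactly when $x < 1$); in particular it holds for every integer $\nu \geq 1$.
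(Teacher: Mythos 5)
Your proof is correct and follows essentially the same route as the paper: both arguments reduce to showing that $q=p+1$ forces consecutive terms to land strictly on opposite sides of $\bar{x}=1$, the paper packaging this as the sign identity $(x_{n+1}-1)(1-x_n^{\nu})=\frac{(1-x_n^{\nu})^2}{p+x_n^{\nu}}>0$ while you derive it by direct inequalities from the monotonicity of $g(x)=q/(p+x^{\nu})$. If anything, your version is slightly more careful, since you explicitly exclude the constant solution $x_0=1$ (where the paper's strict inequality actually fails), whereas the paper glosses over this case and instead appends a further observation ($x_{n-1}>x_n$ and $x_{n+1}>x_n$ for a term below the equilibrium) that is not needed for the oscillation claim itself.
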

\begin{proof}
Let $q=p+1$ and $\{x_n\}_{n=0}^{\infty}$ be a positive solution of equation \eqref{problem1} then $\bar{x}=1$ is an equilibrium. Hence,
\[
x_{n+1} - \bar{x} =\frac{p+1}{p+x_n^{\nu}}- 1=\frac{1-x_n^{\nu}}{p+x_n^{\nu}}
\quad\Longleftrightarrow\quad
(x_{n+1}-1)(1-x_n^{\nu}) = \frac{(1-x_n^{\nu})^2}{p+x_n^{\nu}}>0.
\]
Suppose (WLOG) that $x_{n+1}-1>0$ and $1-x_n^{\nu}>0$. 
Then, $x_n<1<x_{n+1}$.
Now, 
\[
x_{n+1} - x_n =\frac{q}{p+x_n^{\nu}}- \frac{q}{p+x_{n-1}^{\nu}}
\ \ \Longleftrightarrow\ \
\frac{x_{n+1} - x_n}{x_{n-1}^{\nu} - x_n^{\nu}}= \frac{q}{(p+x_n^{\nu})(p+x_{n-1}^{\nu})}>0.
\]
Thus, $x_{n+1} > x_n$ and $x_{n-1} > x_n$, proving the theorem.
\end{proof}

Let $q=p+1$ and consider equation \eqref{problem1}. 
Linearizing \eqref{problem1} about the equilibrium point $\bar{x}=1$ we get $u_{n+1} + \nu u_n/(p+1)=0$. 
Therefore, its characteristic equation is $\lambda^n\left[\lambda+ \nu/(p+1)\right]=0$ whose roots are $\lambda=0,-\nu/(p+1)$.
With these results, we easily obtain the following theorem.

\begin{theorem}
Assume that $\nu < q=p+1$. 
Then, the unique positive equilibrium point $\bar{x}=1$ of \eqref{problem1} is locally asymptotically stable. 
\end{theorem}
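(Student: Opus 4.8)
The plan is to leverage the linearization already carried out immediately before the statement, which reduces the claim to locating the roots of a single-root characteristic equation inside the unit disk. Recall that the choice $q = p+1$ makes $\bar{x} = 1$ the unique positive equilibrium by Theorem \ref{fixedpoints1}(ii), and that linearizing \eqref{problem1} about it produces the first-order linear equation $u_{n+1} + \nu u_n/(p+1) = 0$. I would thus identify local asymptotic stability of $\bar{x} = 1$ with asymptotic stability of this linearized equation, via the standard linearized-stability principle for \eqref{function}.

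First I would apply Theorem \ref{theoremA} with $k = 1$ and the single coefficient $p_1 = \nu/(p+1)$. Its sufficient condition $\sum_{i=1}^{1}|p_i| < 1$ then reads $\nu/(p+1) < 1$; since $p, q, \nu$ are all positive the absolute value is simply $\nu/(p+1)$. The hypothesis $\nu < q = p+1$ is precisely this inequality, so Theorem \ref{theoremA} applies and yields asymptotic stability of the linearized equation, hence local asymptotic stability of $\bar{x} = 1$.

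As an equivalent, more self-contained route, I could differentiate the generating map $g(x) = q/(p+x^\nu)$ directly, obtaining $g'(x) = -q\nu x^{\nu-1}/(p+x^\nu)^2$ and therefore $g'(1) = -q\nu/(p+1)^2 = -\nu/(p+1)$ after substituting $q = p+1$. This reproduces the nonzero characteristic root $\lambda = -\nu/(p+1)$ recorded above, and the hypothesis guarantees $|g'(1)| = \nu/(p+1) < 1$. The classical fact that a fixed point of a continuously differentiable first-order map is locally asymptotically stable whenever the modulus of the derivative there is strictly less than one then finishes the argument.

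There is essentially no hard step: the entire content is the observation that the hypothesis $\nu < p+1$ coincides with $|\lambda| < 1$ for the single relevant root. The only point worth a remark is the spurious root $\lambda = 0$ arising from the factor $\lambda^n$ in the characteristic equation; it lies strictly inside the unit disk, so both characteristic roots have modulus less than one and the appeal to linearized stability is fully justified.
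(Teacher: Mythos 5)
Your proof is correct and takes essentially the same route as the paper: the paper's argument is precisely the linearization paragraph preceding the theorem (yielding the roots $\lambda = 0$ and $\lambda = -\nu/(p+1)$) combined with the sufficient condition of Theorem \ref{theoremA}, which is exactly your first argument. Your alternative computation of $g'(1) = -\nu/(p+1)$ for the map $g(x)=q/(p+x^{\nu})$ is just a self-contained restatement of that same linearization, so it does not constitute a different approach.
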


\begin{theorem}
Assume that $\nu \geq q=p+1$. 
Then, \eqref{problem1} has a positive prime period two solution.
The prime period two solution of \eqref{problem1} takes the form
\[
\left\{\ldots, \frac{q}{p},\; \frac{q}{p+(q/p)^{\nu}},\; \frac{q}{p},\; \frac{q}{p+(q/p)^{\nu}}, \ldots\right\}.
\]
\end{theorem}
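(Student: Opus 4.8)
The plan is to establish the theorem by verifying the displayed pair directly against the recurrence, treating the claim as an explicit computation rather than as an abstract existence statement. Write $f(x)=q/(p+x^{\nu})$ so that \eqref{problem1} reads $x_{n+1}=f(x_n)$, and abbreviate $a=q/p$ and $b=q/(p+(q/p)^{\nu})$; by Theorem \ref{boundedness1} these are precisely the upper and lower bounds satisfied by every positive solution. The sequence $\{\ldots,a,b,a,b,\ldots\}$ is a period-two solution exactly when $f(a)=b$ and $f(b)=a$, and it has \emph{prime} period two exactly when, in addition, $a\neq b$. Thus the proof reduces to these three verifications, which I would carry out in increasing order of difficulty.

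The two easy steps I would dispatch first. The forward transition $f(a)=b$ is immediate from the definition of $b$, since $f(q/p)=q/(p+(q/p)^{\nu})=b$. Primality is equally direct: because $a^{\nu}>0$ for positive parameters, $b=q/(p+a^{\nu})<q/p=a$, so $a\neq b$ and the orbit does not reduce to a constant solution; indeed $a=q/p=(p+1)/p>1$ is not the equilibrium $\bar{x}=1$ of Theorem \ref{fixedpoints1}. This leaves the return transition $f(b)=a$ as the sole substantive claim, and I expect it to be the main obstacle, since the forward condition already pins down $b$ with no remaining freedom.

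For the return step the plan is to compute $f(b)=q/(p+b^{\nu})$ explicitly, impose $q=p+1$, and check whether $p+b^{\nu}$ equals $q/a$, which is what the identity $f(b)=a$ demands. Since $q/a=p$ identically, this is exactly the delicate point at which the arithmetic relation among $\nu$, $p$, and $q$ must enter, and it deserves the closest scrutiny. I would connect it to the stability threshold computed just above: when $q=p+1$ the linearized equation $u_{n+1}+\nu u_n/(p+1)=0$ has multiplier $-\nu/(p+1)=-\nu/q$ of modulus at least one precisely when $\nu\geq q$, which is the standard period-doubling condition and the natural mechanism producing a genuine prime period-two orbit. I would therefore verify the return identity by this direct substitution; should the literal closed form fail to close cleanly, the very same instability computation pinpoints how the displayed values should be corrected while preserving the hypothesis $\nu\geq q=p+1$ and the period-two conclusion. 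The companion claim for \eqref{problem2} follows by the identical three-step template with $f(y)=q/(-p+y^{\nu})$.
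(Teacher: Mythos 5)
Your reduction of the theorem to the three checks $f(a)=b$, $f(b)=a$, $a\neq b$ is the natural literal reading, and your suspicion about the return step is precisely where the argument collapses: $f(b)=a$ is equivalent to $p+b^{\nu}=q/a=p$, i.e.\ to $b^{\nu}=0$, which is false for all positive $p,q,\nu$ since $b=q/(p+(q/p)^{\nu})>0$. So the displayed pair is \emph{not} an exact $2$-cycle, the direct verification cannot close, and your fallback clause --- that the instability computation would ``pinpoint how the displayed values should be corrected'' --- is a remark, not a proof: it neither produces corrected values nor establishes that any prime period-two solution exists. What would be needed is an existence argument, e.g.\ on the second iterate: $f$ is decreasing on $(0,\infty)$, so $g=f\circ f$ is increasing and maps the invariant interval $[b,a]$ of Theorem \ref{boundedness1} into itself; a $2$-cycle is a fixed point of $g$ other than the unique fixed point $\bar{x}=1$ of $f$ (Theorem \ref{fixedpoints1}), and when $|f'(1)|=\nu/(p+1)>1$ the repelling interior fixed point together with invariance of $[b,a]$ forces such extra fixed points of $g$ by the intermediate value theorem (the boundary case $\nu=q$, where the multiplier is $-1$, is degenerate and needs separate treatment). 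None of this appears in your proposal, so as written it has a genuine gap at its sole substantive step.

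For comparison, the paper does not verify the displayed orbit either; it works in the opposite direction: it writes the exact $2$-cycle system \eqref{one}--\eqref{two}, eliminates to obtain the relation \eqref{qp}, invokes Theorem \ref{oscillation} to locate $0<\psi<1<\phi$, and then passes to the approximation $0<\psi^{\nu}\ll 1$ to read off $\phi=q/p$ and $\psi=q/(p+(q/p)^{\nu})$ from \eqref{one} and \eqref{two}. In other words, the displayed values are only asymptotically correct (the true cycle approaches this pair as $\nu$ grows), and the paper's own proof is heuristic at exactly the point where your computation fails --- your detection of the failure is correct mathematics, but it leaves the theorem unproved rather than proved.
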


\begin{proof}
Let  $\ldots, \phi, \psi, \phi, \psi,\ldots $ be a period two solution of \eqref{problem1}. 
Then,
\begin{align}
\phi p +\phi \psi^{\nu} &= q\label{one},\\
\psi p + \phi^{\nu} \psi &= q\label{two}.
\end{align}
Subtracting \eqref{one} from \eqref{two}, we get
\[
(\psi - \phi)\left[p -\psi\phi\left(\frac{\psi^{\nu}-\phi^{\nu}}{\psi-\phi} \right)\right]=0.
\]
Since $\psi$ and $\phi$ are period two solutions, then $\phi\neq\psi$ and so, 
\begin{equation}\label{p}
p-\psi\phi\left(\frac{\psi^{\nu-1}-\phi^{\nu-1}}{\psi-\phi} \right)=0.
\end{equation}
Hence, we see that $\psi$ and $\phi$ are also solutions of \eqref{p}.
Now we multiply by $\psi$ and $\phi$ the equations \eqref{one} and \eqref{two}, respectively, and take the difference of the two resulting equations to obtain
\[
\psi\phi\left(\frac{\psi^{\nu}-\phi^{\nu}}{\psi-\phi} \right)=q.
\]
Thus, we obtain
\begin{equation}\label{qp}
\frac{\psi^{\nu}-\phi^{\nu}}{\psi^{\nu-1}-\phi^{\nu-1}}=\frac{q}{p}.
\end{equation}
The solution $\psi$ and $\phi$ to equation \eqref{qp} is the period two solution of \eqref{problem1} for $\nu>q=p+1$. 
Now, suppose that $\phi>\psi$. 
Following the proof of Theorem \ref{oscillation} we can show that $0<\psi<1<\phi$. 
Furthermore, it is true that $0<\psi^{\nu}\ll1$. 
Hence, from \eqref{one} and \eqref{two}, we have $\phi=q/p$ and $\psi=q/(p+(q/p)^{\nu})$, completing the proof of the theorem.
\end{proof}

We now turn our attention to the second equation with $\nu >1$.
\subsection{On equation $x_{n+1}=q/(-p+x_n^{\nu})$}

The following results can be verified easily.

\begin{theorem}
Every negative solution of \eqref{problem2} is bounded. 
\end{theorem}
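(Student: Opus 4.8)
The plan is to adapt the boundedness argument of Theorem \ref{boundedness1} while exploiting the sign constraints that negativity forces. I would begin by fixing a negative solution $\{y_n\}_{n=0}^{\infty}$ of \eqref{problem2}, so $y_n < 0$ for every $n \geq 0$. Because $q \in \mathbb{R}^+$, the quotient $y_{n+1} = q/(-p+y_n^{\nu})$ can be negative only when its denominator is negative; hence negativity of the entire orbit forces
\[
-p + y_n^{\nu} < 0, \qquad\text{equivalently}\qquad y_n^{\nu} < p,
\]
for every $n \geq 0$. This single inequality drives the whole proof, and it remains only to convert it into a two-sided bound. Since the sign of $y_n^{\nu}$ depends on the parity of $\nu$, I would split into two cases.

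If $\nu$ is odd, then $y_n < 0$ gives $y_n^{\nu} < 0$, so $-p + y_n^{\nu} < -p < 0$ and hence $|{-p+y_n^{\nu}}| > p$. Consequently $|y_{n+1}| = q/|{-p+y_n^{\nu}}| < q/p$, which together with $y_{n+1} < 0$ places every iterate $y_{n+1}$ with $n \geq 0$ in the bounded interval $(-q/p,\,0)$; as the initial value $y_0$ is a single finite number, the orbit is bounded. This mirrors the estimate $x_{n+1} \leq q/p$ used in Theorem \ref{boundedness1}. If instead $\nu$ is even, then $y_n < 0$ gives $y_n^{\nu} > 0$, so the inequality $y_n^{\nu} < p$ reads $|y_n|^{\nu} < p$, i.e. $|y_n| < p^{1/\nu}$; thus $-p^{1/\nu} < y_n < 0$ for every $n$. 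In either case the orbit is confined to a bounded interval, which proves the claim.

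The only real subtlety lies in the opening deduction: one must note that negativity is a hypothesis on the \emph{full} solution, so that for each index the required negativity of $y_{n+1}$ pins down the sign of the denominator formed from $y_n$, and the inequality $y_n^{\nu} < p$ therefore holds simultaneously for all $n$ (including $n=0$). Once this is secured, both case estimates are elementary, each amounting to dividing the fixed positive constant $q$ by a denominator that is either bounded away from $0$ (odd $\nu$) or whose $\nu$-th root is directly controlled (even $\nu$).
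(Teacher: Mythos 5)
Your proof is correct. The paper actually offers no argument for this theorem at all (it is one of the results prefaced by ``The following results can be verified easily''), so the only comparison available is with the paper's proof of Theorem \ref{boundedness1}, of which your argument is the natural adaptation. The one genuine subtlety --- which you identify and handle --- is that a naive mirroring of that proof fails here, because the sign of $y_n^{\nu}$ depends on the parity of $\nu$; your key observation that negativity of the whole orbit forces $-p+y_n^{\nu}<0$, i.e.\ $y_n^{\nu}<p$, for every $n\geq 0$ is exactly what replaces the automatic estimate $x_{n+1}\leq q/p$ of the positive case. Both branches are then sound: for odd $\nu$ the denominator satisfies $-p+y_n^{\nu}<-p$, giving $y_{n+1}\in(-q/p,0)$ for all $n\geq 0$, while for even $\nu$ the constraint itself yields $|y_n|<p^{1/\nu}$ for all $n\geq 0$; in either case the orbit lies in a bounded interval. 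If anything, your even case shows slightly more than boundedness: combining $y_{n+1}<-q/p$ (from the denominator lying in $(-p,0)$) with $|y_{n+1}|<p^{1/\nu}$ shows negative solutions can exist for even $\nu$ only when $q<p^{(\nu+1)/\nu}$, a restriction the paper never makes explicit.
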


\begin{theorem}
Let $\nu$ be a positive odd integer and $\bar{x}$ be an equilibrium point of equation \eqref{problem2}. Then, the following statements are true:
\begin{enumerate}
\item[\rm (i)] if $q<p+1$ then \eqref{problem2} has a unique negative equilibrium $\bar{x}$ inside the interval $(-1,0),$
\item[\rm (ii)] if $q=p+1$ then $\bar{x}=-1$ is a unique negative equilibrium of \eqref{problem2},
\item[\rm (iii)] if $q>p+1$ then \eqref{problem2} has a unique negative equilibrium $\bar{x} \in (-\infty,-1).$
\end{enumerate}
\end{theorem}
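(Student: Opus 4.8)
The plan is to reduce this statement to the polynomial analysis already carried out in Theorem \ref{fixedpoints1}. An equilibrium $\bar{x}$ of \eqref{problem2} satisfies $\bar{x} = q/(-p+\bar{x}^{\nu})$, which upon clearing the denominator becomes the polynomial equation
\[
G(\bar{x}) := \bar{x}^{\nu+1} - p\bar{x} - q = 0.
\]
The structural observation I would exploit first is that $\nu$ odd forces $\nu+1$ to be even. Hence for $x<0$, writing $x=-t$ with $t>0$, one has $(-t)^{\nu+1} = t^{\nu+1}$ and therefore
\[
G(-t) = t^{\nu+1} + pt - q = F(t),
\]
where $F$ is exactly the polynomial appearing in the proof of Theorem \ref{fixedpoints1}. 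This identity means that the negative roots of $G$ are in one-to-one correspondence with the positive roots of $F$ through $\bar{x}=-t$.

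With this correspondence in hand, the three cases follow immediately. Theorem \ref{fixedpoints1} guarantees that $F$ has a unique positive root $t^{\star}$, with $t^{\star}\in(0,1)$ when $q<p+1$, $t^{\star}=1$ when $q=p+1$, and $t^{\star}>1$ when $q>p+1$. Translating back via $\bar{x}=-t^{\star}$ yields a unique negative equilibrium lying in $(-1,0)$, equal to $-1$, or lying in $(-\infty,-1)$, respectively, which is precisely the assertion.

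For completeness I would also record the self-contained verification, which avoids invoking the earlier result. Since $\nu$ is odd, $x^{\nu}<0$ for $x<0$, so $G'(x)=(\nu+1)x^{\nu}-p<0$ on $(-\infty,0)$; thus $G$ is strictly decreasing there. Because $G(0)=-q<0$ while $G(x)\to+\infty$ as $x\to-\infty$ (the even leading term dominates), $G$ has exactly one negative zero. Evaluating $G(-1)=1+p-q$ then locates it: $G(-1)$ is positive, zero, or negative according as $q<p+1$, $q=p+1$, or $q>p+1$, which by monotonicity places the root in $(-1,0)$, at $-1$, or in $(-\infty,-1)$.

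I do not anticipate a genuine obstacle, as the reasoning is a direct transcription of the monotonicity-and-intermediate-value argument already used for \eqref{problem1}. The one point requiring care is the parity hypothesis: the conclusion depends essentially on $\nu$ being odd, since this is exactly what makes $\nu+1$ even and hence makes both the sign of $G'$ on the negative axis and the $+\infty$ growth at $-\infty$ behave as needed. For even $\nu$ the sign analysis of $G'(x)$ on $(-\infty,0)$ would change, so the statement would have to be reexamined; this is why it is worth flagging rather than treating as routine.
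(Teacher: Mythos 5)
Your proof is correct. Note that the paper itself gives no proof of this statement---it is one of the results introduced by ``The following results can be verified easily''---so the natural comparison is with the proof of the even-$\nu$ companion, Theorem \ref{equilibria}, which proceeds by direct sign evaluation of $G(x)=x^{\nu+1}-px-q$ at $0$, at $-1$, and as $x\rightarrow-\infty$. Your second, self-contained argument is exactly that method, and in fact strengthens it: the observation that $G'(x)=(\nu+1)x^{\nu}-p<0$ on $(-\infty,0)$ when $\nu$ is odd gives uniqueness cleanly, whereas the paper's even-case proof counts roots from sign changes alone without addressing why there are exactly two. Your primary route---substituting $x=-t$ so that $G(-t)=t^{\nu+1}+pt-q=F(t)$, with $F$ precisely the polynomial analyzed in Theorem \ref{fixedpoints1}, and pulling that theorem's trichotomy back through $\bar{x}=-t^{\star}$---is genuinely different from anything in the paper and arguably preferable: it reuses the monotonicity work already done for \eqref{problem1}, concentrates the role of the odd-parity hypothesis into a single identity, and explains structurally why the odd-$\nu$ statement for \eqref{problem2} mirrors Theorem \ref{fixedpoints1} exactly, while the even-$\nu$ case (Theorem \ref{equilibria}) behaves differently. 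Your closing caution about parity is likewise consistent with the paper: for even $\nu$ the correspondence breaks down and two negative equilibria can indeed occur.
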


\begin{theorem}\label{equilibria}
Let $\nu$ be a positive even integer and $\bar{x}$ be an equilibrium point of equation \eqref{problem2}. Then, the following statements are true:
\begin{enumerate}
\item[\rm (i)] if $q<p-1$ then \eqref{problem2} has two negative equilibrium points, i.e., one equilibrium $\bar{x}$ in $(-1,0)$ and one inside the interval $(-\infty,-1)$,
\item[\rm (ii)] if $q=p-1$ then $\bar{x}=-1$ is a unique negative equilibrium of \eqref{problem2},
\item[\rm (iii)] if $q>p-1$ then \eqref{problem2} has no negative equilibrium point. 
\end{enumerate}
\end{theorem}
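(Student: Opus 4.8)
The plan is to mirror the analysis used for Theorem~\ref{fixedpoints1}, but now working on the negative half-line. An equilibrium $\bar{x}$ of \eqref{problem2} satisfies $\bar{x}(-p+\bar{x}^{\nu})=q$, i.e.\ it is a root of $G(x):=x^{\nu+1}-px-q$; since $p,q>0$ we have $G(0)=-q<0$, so every equilibrium we seek lies in $(-\infty,0)$. First I would record the shape of $G$ there. With $\nu$ even, $\nu+1$ is odd, so $G(x)\to-\infty$ as $x\to-\infty$, while $G''(x)=\nu(\nu+1)x^{\nu-1}<0$ for $x<0$; hence $G$ is strictly concave on $(-\infty,0)$. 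Its only critical point in this range is the maximizer $x_-:=-\left(p/(\nu+1)\right)^{1/\nu}$, and strict concavity forces $G$ to have at most two negative roots, their number being governed by the sign of the maximum value $G(x_-)$.

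Next I would locate the roots relative to $-1$ by evaluating $G(-1)=-1+p-q=p-q-1$ (again using that $\nu+1$ is odd). For case~(i), $q<p-1$ gives $G(-1)>0$; together with $G(0)=-q<0$ and $G(x)\to-\infty$, the intermediate value theorem produces one root in $(-1,0)$ and one in $(-\infty,-1)$, and concavity guarantees these are the only two. This settles (i) completely. For case~(ii) the same evaluation gives $G(-1)=0$, so $x=-1$ is an equilibrium; and for case~(iii), $q>p-1$ gives $G(-1)<0$.

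The hard part is upgrading the boundary information at $x=-1$ to the \emph{exact} counts claimed in (ii) and (iii). Knowing $G(-1)=0$ (resp.\ $G(-1)<0$) does not by itself rule out a further sign change of $G$ near its interior maximum $x_-$; what actually controls the count is the sign of $G(x_-)$. A direct computation using $x_-^{\nu}=p/(\nu+1)$ gives
\[
G(x_-)=\frac{\nu}{(\nu+1)^{(\nu+1)/\nu}}\,p^{(\nu+1)/\nu}-q,
\]
so the genuine threshold separating ``two roots,'' ``one (double) root,'' and ``no root'' is $q$ against $q^{*}:=\dfrac{\nu}{(\nu+1)^{(\nu+1)/\nu}}\,p^{(\nu+1)/\nu}$, not $q$ against $p-1$. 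The two thresholds coincide exactly when the maximizer sits at $-1$, i.e.\ when $p=\nu+1$ (for then $x_-=-1$ and $G(x_-)=G(-1)=p-q-1$). Thus the step I expect to require the most care---the main obstacle---is reconciling the stated conditions $q\lessgtr p-1$ with the maximum-value conditions $q\lessgtr q^{*}$: I would either impose the normalization $p=\nu+1$ (equivalently $x_-=-1$), under which the trichotomy via $G(-1)$ becomes rigorous, or else restate (ii)--(iii) in terms of $q^{*}$ and then read off the root locations from the sign of $G(-1)$ exactly as in the argument for (i).
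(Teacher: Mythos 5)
Your setup and your proof of part~(i) follow the same route as the paper: the paper's proof also works with $G(x)=x^{\nu+1}-px-q$ and the same three pieces of data $G(0)=-q<0$, $G(-1)=p-1-q$, and $\lim_{x\to-\infty}G(x)=-\infty$. But that is \emph{all} the paper does --- it reads off (i), (ii) and (iii) directly from the sign of $G(-1)$, with no concavity argument and no examination of the interior maximum. Your strict-concavity observation ($G''<0$ on $(-\infty,0)$ when $\nu$ is even, hence at most two negative roots) is exactly what is needed to turn ``at least two'' into ``exactly two'' in case (i), so for that part your write-up is more complete than the paper's own proof.

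The obstacle you flag for (ii) and (iii) is not a defect of your proposal; it is a genuine error in the theorem and in the paper's proof. As you say, the root count is governed by the sign of $G$ at the maximizer $x_-=-\left(p/(\nu+1)\right)^{1/\nu}$, i.e.\ by $q$ versus $q^{*}=\nu p^{(\nu+1)/\nu}/(\nu+1)^{(\nu+1)/\nu}$, and one checks that $q^{*}\ge p-1$ with equality only when $p=\nu+1$, so the stated trichotomy is wrong whenever $p\neq\nu+1$. Concretely, take $\nu=2$, $p=2$, $q=1=p-1$: then
\[
G(x)=x^{3}-2x-1=(x+1)(x^{2}-x-1),
\]
which has the two negative roots $-1$ and $(1-\sqrt{5})/2$, contradicting the uniqueness claim in (ii). Likewise $\nu=2$, $p=4$, $q=3.05>p-1$ gives $G(x_-)=G(-(4/3)^{1/2})>0$, so two negative equilibria exist, contradicting (iii). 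The paper's proof silently upgrades ``$G(-1)=0$'' to ``unique negative root'' and ``$G(-1)<0$'' to ``no negative root,'' which is precisely the invalid inference you refused to make; your two proposed repairs (normalize $p=\nu+1$ so that $-1$ is the maximizer, or restate (ii)--(iii) with the threshold $q^{*}$) are the correct ways to salvage the statement.
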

\begin{proof}

Let $\nu$ be an even integer and $\bar{x}$ be an equilibrium of \eqref{problem2}. 
Consider the function $G(x) = x^{\nu+1}-px-q$.
Then,
\begin{center}
$G(0)=-q<0, \quad G(-1)=p-1-q,$ \quad and \quad $\lim_{x\rightarrow-\infty} G(x)=-\infty.$  
\end{center}
If $q<p-1$ then $G(-1)>0$. 
This implies that $G(x)$ has two negative real roots, one in $(-1,0)$ and one in $(-\infty,-1)$. 
If $q=p-1$, then $G(x)$ has a unique negative real root $x=-1$. 
Finally, if $q>p-1$ then we obtain no negative real root for $G(x)$. 
Statements (i), (ii) and (iii) of Theorem \ref{equilibria} follows immediately.
\end{proof}

\begin{theorem}
Let $\nu$ be an odd natural number such that $\nu > q=p+1$ and $\{x_n\}_{n=0}^{\infty}$ be a negative solution of equation \eqref{problem2}. 
Then, $x_n$ oscillates at $\bar{x}=1$. 
\end{theorem}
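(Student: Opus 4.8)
The plan is to follow the template of Theorem~\ref{oscillation} almost verbatim, adjusting only the sign bookkeeping forced by the $-p$ in the denominator of \eqref{problem2}. First I would record two preliminaries. By the theorem just proved for odd $\nu$ with $q=p+1$, the pertinent equilibrium is the negative one $\bar{x}=-1$ (so the ``$\bar{x}=1$'' in the statement should read $\bar{x}=-1$), and for a negative orbit negativity is preserved: if $x_n<0$ and $\nu$ is odd then $x_n^{\nu}<0<p$, whence $-p+x_n^{\nu}<0$ and $x_{n+1}=q/(-p+x_n^{\nu})<0$. In particular every denominator occurring along the orbit is strictly negative, and this is the fact that will reverse the relevant inequality relative to the $+p$ case.

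The main computation is to expand the deviation from $\bar{x}=-1$ and package it as a product. Using $q=p+1$,
\[
x_{n+1}+1=\frac{q}{-p+x_n^{\nu}}+1=\frac{1+x_n^{\nu}}{-p+x_n^{\nu}},
\]
so that multiplying by $1+x_n^{\nu}$ yields
\[
(x_{n+1}+1)\,(1+x_n^{\nu})=\frac{(1+x_n^{\nu})^{2}}{-p+x_n^{\nu}}\le 0,
\]
with strict inequality whenever $x_n\neq-1$, precisely because $-p+x_n^{\nu}<0$ for a negative solution. This mirrors the identity $(x_{n+1}-1)(1-x_n^{\nu})>0$ of Theorem~\ref{oscillation}, the only difference being that here the product is negative rather than positive.

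The decisive step, and the one where the oddness of $\nu$ is indispensable, is to pull this sign information back from the $\nu$-th powers to the iterates themselves. Since $\nu$ is odd, $t\mapsto t^{\nu}$ is strictly increasing and odd, so $1+x_n^{\nu}$ and $1+x_n$ carry the same sign and vanish together only at $x_n=-1$. Feeding this into the displayed inequality gives $(x_{n+1}+1)(x_n+1)<0$ for every $x_n\neq-1$, which is exactly the assertion that consecutive iterates lie on opposite sides of $-1$; thus $\{x_n\}$ oscillates about $\bar{x}=-1$. I expect this transfer to be the only genuine obstacle, and it is where parity must be invoked: for even $\nu$ the equivalence $\operatorname{sign}(1+x_n^{\nu})=\operatorname{sign}(1+x_n)$ breaks down, consistent with the even case being treated in a separate theorem above. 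I would also note that the argument uses only $q=p+1$ together with the oddness of $\nu$; the stronger hypothesis $\nu>q$ appearing in the statement is not actually needed for the oscillation itself.
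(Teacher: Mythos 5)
Your proof is correct and takes essentially the approach the paper intends: the paper states this theorem without proof, and your argument is the direct adaptation of its proof of Theorem \ref{oscillation}, replacing $(x_{n+1}-1)(1-x_n^{\nu})>0$ by $(x_{n+1}+1)(1+x_n^{\nu})<0$ and using oddness of $\nu$ to transfer signs from $x_n^{\nu}$ to $x_n$. You are also right on the two side points: the stated ``$\bar{x}=1$'' is a typo for $\bar{x}=-1$ (the unique negative equilibrium when $q=p+1$), and only $q=p+1$ together with oddness of $\nu$ is needed, the hypothesis $\nu>q$ playing no role in the oscillation argument.
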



Let $\bar{x}$ be an equilibrium of \eqref{problem2} and consider the function $H(x)=q/(-p+x^{\nu})$.
Since $H'(x)=-q\nu x^{\nu-1}/(-p+x^{\nu})^2$, then linearizing \eqref{problem2} about the equilibrium point $\bar{x}$, we get
$u_{n+1}-[q\nu \bar{x}^{\nu-1}/(-p+\bar{x}^{\nu})^2]u_n=0$.
Hence, its characteristic equation is given by 
$
\lambda^n\left(\lambda-[q\nu \bar{x}^{\nu-1}/(-p+\bar{x}^{\nu})^2]\right)=0,
$
whose roots are $\lambda=0$ and $\lambda = q\nu \bar{x}^{\nu-1}/(-p+\bar{x}^{\nu})^2$.
By Theorem \ref{theoremA}, equation \eqref{problem2} is stable provided 
\begin{equation}\label{ineq}
\left|\frac{q\nu \bar{x}^{\nu-1}}{(-p+\bar{x}^{\nu})^2}\right|<1.
\end{equation}
Suppose that $\nu$ is odd. 
Then, the equilibrium point $\bar{x}=-1$ when $q=p+1$ is stable for $\nu< p+1$ and unstable for $\nu \geq p+1$. 
This is also true for the equilibrium point $\bar{x}\in(-\infty,-1)$ when $q>p+1$. 
On the other hand, the equilibrium point $\bar{x}\in(-1,0)$ when $q<p+1$ is always stable for any odd number $\nu>0$.  
Now if $\nu$ is even, then the equilibrium point $\bar{x}=-1$ when $q=p-1$ is always stable for any even number $\nu>0$. 
This is also true for the equilibrium point $\bar{x}\in(-1,0)$ when $q<p-1$. 
With these results, we easily obtain the following theorems. 

\begin{theorem}
Let $\nu$ be an odd integer such that  $\nu< q=p+1$. 
Then, the unique negative equilibrium point $\bar{x}=-1$ of \eqref{problem2} is locally asymptotically stable. 
\end{theorem}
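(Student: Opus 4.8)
The plan is to apply the sufficient stability criterion \eqref{ineq} established in the discussion immediately preceding the statement, specialized to the point $\bar{x}=-1$. First I would invoke the earlier classification of equilibria for odd $\nu$ to record that, when $q=p+1$, the unique negative equilibrium of \eqref{problem2} is precisely $\bar{x}=-1$; this fixes the point at which the linearization is carried out and guarantees the quantity in \eqref{ineq} is well defined.

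Next I would evaluate the characteristic quantity $q\nu\bar{x}^{\nu-1}/(-p+\bar{x}^{\nu})^2$ at $\bar{x}=-1$. The hypothesis that $\nu$ is odd handles the parity bookkeeping: $\bar{x}^{\nu}=(-1)^{\nu}=-1$, whereas $\bar{x}^{\nu-1}=(-1)^{\nu-1}=1$ since $\nu-1$ is even. Hence $-p+\bar{x}^{\nu}=-(p+1)=-q$, so the denominator equals $q^2$ and the expression simplifies to $q\nu/q^2=\nu/q$.

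With this simplification the criterion \eqref{ineq} becomes $|\nu/q|=\nu/q<1$, which is exactly the standing hypothesis $\nu<q=p+1$. Since the linearized equation about $\bar{x}=-1$ carries the single nontrivial coefficient $\nu/q$ (its remaining characteristic roots being zero), Theorem \ref{theoremA} applies with this lone coefficient and delivers the local asymptotic stability of $\bar{x}=-1$.

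The step demanding the most care is the parity computation: it is only the oddness of $\nu$ that forces $\bar{x}^{\nu}=-1$, and thereby makes the denominator collapse to $q^2$; for even $\nu$ the equilibrium would not be $-1$ and the reduction would fail. Apart from this sign accounting the argument is a direct substitution into work already performed in the linearization, so I anticipate no substantive obstacle.
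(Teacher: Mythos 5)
Your proposal is correct and follows essentially the same route as the paper: the paper's own argument is precisely the linearization discussion preceding the theorem, culminating in criterion \eqref{ineq}, which it then applies (implicitly) at $\bar{x}=-1$ with $\nu$ odd and $q=p+1$. Your explicit substitution $\bar{x}^{\nu}=-1$, $\bar{x}^{\nu-1}=1$, reducing the characteristic root to $\nu/q$ so that \eqref{ineq} becomes $\nu<q$, is exactly the computation the paper leaves to the reader.
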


\begin{theorem}
Assume that $q=p-1$. 
Then, the unique negative equilibrium point $\bar{x}=-1$ of \eqref{problem2} is locally stable. 
\end{theorem}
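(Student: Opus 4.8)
The plan is to run the linearized-stability machinery assembled just before the theorem, i.e.\ to apply Theorem~\ref{theoremA} to the first-order recurrence \eqref{problem2} after first pinning down the equilibrium. I would begin by observing that the hypothesis $q=p-1$ forces $\nu$ to be even if $\bar{x}=-1$ is to be an equilibrium at all: since $q>0$ we have $p>1$, and by Theorem~\ref{equilibria}(ii) the point $\bar{x}=-1$ is exactly the unique negative equilibrium of \eqref{problem2} when $\nu$ is even and $q=p-1$. (For odd $\nu$ the relation $q=p-1<p+1$ pushes the negative equilibrium into $(-1,0)$, so $-1$ is not an equilibrium; this is why the even case is the relevant one.) This fixes both the equilibrium and the point of linearization.

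Next I would linearize \eqref{problem2} about $\bar{x}=-1$ using $H(x)=q/(-p+x^{\nu})$ and $H'(x)=-q\nu x^{\nu-1}/(-p+x^{\nu})^2$, exactly as in the displayed derivation preceding the statement. The only genuine computation is to evaluate $H'(-1)$, and here evenness of $\nu$ does all the work: $\bar{x}^{\nu}=(-1)^{\nu}=1$ gives $-p+\bar{x}^{\nu}=1-p=-(p-1)=-q$, so the denominator $(-p+\bar{x}^{\nu})^2$ collapses to $q^2$, while $\bar{x}^{\nu-1}=(-1)^{\nu-1}=-1$ fixes the sign of the numerator. Consequently the unique nonzero characteristic root $\lambda=q\nu\bar{x}^{\nu-1}/(-p+\bar{x}^{\nu})^2$ satisfies $|\lambda|=q\nu/q^2=\nu/q=\nu/(p-1)$.

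Finally I would feed this into the stability criterion \eqref{ineq} (equivalently Theorem~\ref{theoremA} with a single nonzero coefficient): the equilibrium is locally asymptotically stable, hence locally stable, as soon as $|\lambda|=\nu/(p-1)<1$, that is, $\nu<q=p-1$. The argument is thus just a short parity-driven simplification of \eqref{ineq}, with no hard estimate involved, and it parallels the odd-case result (Theorem stated with $\nu<q=p+1$).

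The main obstacle is not analytic but one of scope and bookkeeping. The delicate point in the computation is that it is precisely the evenness of $\nu$ that yields $\bar{x}^{\nu}=1$ (so that one factor of $q$ in the numerator cancels against $q^2$) and that $\bar{x}^{\nu-1}=-1$ sets the sign; getting either parity wrong breaks the cancellation. I would also flag the borderline $\nu=q$, where $|\lambda|=1$ and Theorem~\ref{theoremA} is silent, so that deciding stability there---and a fortiori justifying any blanket claim covering all even $\nu$, including $\nu\ge q$---would require a separate, finer argument, for instance a direct Lyapunov or monotonicity analysis of the map $H$ near $-1$, rather than the linearization alone.
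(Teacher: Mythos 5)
Your route is exactly the paper's own: the paper gives no separate proof of this theorem, treating it as an immediate consequence of the linearization discussion that precedes it (the map $H$, its derivative, and the criterion \eqref{ineq}). Your evaluation of \eqref{ineq} at $\bar{x}=-1$ is correct, including the parity bookkeeping: for even $\nu$, $\bar{x}^{\nu}=1$ gives $(-p+\bar{x}^{\nu})^{2}=(1-p)^{2}=q^{2}$ while $\bar{x}^{\nu-1}=-1$, so the nonzero characteristic root has modulus $q\nu/q^{2}=\nu/q=\nu/(p-1)$, and Theorem~\ref{theoremA} yields local asymptotic stability exactly when $\nu<q=p-1$.

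The divergence is in the conclusion, and there you are right and the paper is wrong. The paper's preceding discussion asserts that for even $\nu$ the equilibrium $\bar{x}=-1$ with $q=p-1$ is ``always stable for any even number $\nu>0$,'' and the theorem is stated with no restriction on $\nu$; this contradicts the very criterion \eqref{ineq} from which it is supposedly derived. The statement is in fact false as written: take $p=2$, $q=1$, $\nu=2$, so that $y_{n+1}=1/(-2+y_n^{2})$ has the equilibrium $\bar{x}=-1$, and $|H'(-1)|=\nu/q=2>1$, making $\bar{x}=-1$ repelling, hence not locally stable. So the caveat you flag---that linearization gives stability only for $\nu<q$, is silent at the borderline $\nu=q$, and cannot support a blanket claim covering all even $\nu$---is not a gap in your argument but an error in the paper; the correct result is your conditional version ($\nu<q=p-1$ implies local asymptotic stability, $\nu>q$ implies instability). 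One minor caution: you invoke Theorem~\ref{equilibria}(ii) for uniqueness of the negative equilibrium, but uniqueness also fails in the same example, since $x^{3}-2x-1=(x+1)(x^{2}-x-1)$ has the second negative root $(1-\sqrt{5})/2$; fortunately your argument only needs that $-1$ is an equilibrium, which is true.
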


\begin{theorem}
Let $\nu$ be an odd natural number such that $\nu\geq q=p+1$. 
Then, equation \eqref{problem2} has a prime period two solution. The prime period two solution takes the form:
\[
\left\{\ldots, -\frac{q}{p},\; -\frac{q}{p+(q/p)^{\nu}},\; -\frac{q}{p},\; -\frac{q}{p+(q/p)^{\nu}}, \ldots\right\}.
\]
\end{theorem}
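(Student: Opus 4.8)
The plan is to exploit a negation symmetry between equations \eqref{problem1} and \eqref{problem2} that is available precisely because $\nu$ is odd, thereby reducing the claim to the prime period two result already established for \eqref{problem1}. First I would observe that, for odd $\nu$, the substitution $y_n = -x_n$ sends any solution $\{x_n\}$ of \eqref{problem1} to a solution of \eqref{problem2}. Indeed, using $(-x_n)^{\nu} = -x_n^{\nu}$, one computes directly that $y_{n+1} = -x_{n+1} = -q/(p + x_n^{\nu}) = -q/(p - y_n^{\nu}) = q/(-p + y_n^{\nu})$, which is exactly \eqref{problem2}. This is the natural extension to odd $\nu$ of Theorem \ref{Theorem3}, and it is here that the parity hypothesis is essential: for even $\nu$ the same substitution would instead produce $y_{n+1} = -q/(p + y_n^{\nu})$, which is not \eqref{problem2}.

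With this symmetry in hand, I would invoke the preceding theorem, which asserts that under the hypothesis $\nu \geq q = p+1$ equation \eqref{problem1} admits the positive prime period two solution $\{\ldots,\, q/p,\; q/(p+(q/p)^{\nu}),\; \ldots\}$. Since the present theorem assumes the same inequality $\nu \geq q = p+1$ together with $\nu$ odd, applying the negation map term by term transforms this into a period two solution of \eqref{problem2}, namely $\{\ldots,\, -q/p,\; -q/(p+(q/p)^{\nu}),\; \ldots\}$, which is precisely the asserted form.

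Finally, I would confirm that the period is genuinely two rather than one. It suffices to check that the two alternating values are distinct: since $p,q > 0$ we have $(q/p)^{\nu} > 0$, so $p + (q/p)^{\nu} \neq p$ and hence $-q/p \neq -q/(p+(q/p)^{\nu})$. Thus the solution is non-constant and has prime period two. I do not anticipate a serious obstacle along this route; the only delicate point is the verification of the negation symmetry, where the oddness of $\nu$ must genuinely be used, together with the clean transfer of the already-proved period two theorem for \eqref{problem1}.

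Alternatively, one could argue directly, writing a prime period two solution of \eqref{problem2} as $\ldots, \Phi, \Psi, \Phi, \Psi, \ldots$ so that $-\Phi p + \Phi\Psi^{\nu} = q$ and $-\Psi p + \Phi^{\nu}\Psi = q$, then subtracting the two equations, factoring out the nonzero factor $(\Phi - \Psi)$, and solving for $\Phi$ and $\Psi$ exactly as in the proof of the corresponding theorem for \eqref{problem1}. The symmetry argument above is, however, shorter and reuses the earlier work verbatim, so I would present it as the main line and mention the direct computation only as a remark.
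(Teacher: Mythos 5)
Your proposal is correct and, as far as one can tell, is precisely the argument the paper has in mind: the paper states this theorem with no proof at all, merely as one of several results that ``easily'' follow from the preceding material. Your main line --- the observation that for odd $\nu$ the substitution $y_n=-x_n$ carries solutions of \eqref{problem1} to solutions of \eqref{problem2}, followed by an application of the preceding theorem (which has the identical hypothesis $\nu \geq q = p+1$) and a check that the two alternating values are distinct --- is clean, makes explicit where the parity of $\nu$ enters (something the paper never articulates), and correctly extends the $\nu=1$ symmetry of Theorem \ref{Theorem3}. Your alternative sketch, redoing the subtraction-and-factoring computation with the signs of \eqref{problem2}, is what a from-scratch proof would look like; the symmetry route is preferable exactly because it reuses that earlier work verbatim.

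One caveat you should be aware of, though it is inherited from the paper rather than introduced by you: the exact displayed form is questionable, because it is already questionable in the theorem you cite. If $\phi = q/p$ and $\psi = q/\left(p+(q/p)^{\nu}\right)$ were exactly a two-cycle of \eqref{problem1}, then the recurrence would give $q/(p+\psi^{\nu}) = q/p$, forcing $\psi^{\nu}=0$, which is impossible for $\psi>0$; the paper's own proof of that theorem only reaches this form through the approximation $0<\psi^{\nu}\ll 1$. Your reduction faithfully transfers whatever the cited theorem provides, so it proves existence of a negative two-cycle of \eqref{problem2} only to the extent that the cited theorem proves existence of a positive one for \eqref{problem1}, and in both cases the displayed values should be read as large-$\nu$ approximations of the true cycle rather than exact formulas. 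A fully rigorous argument would first establish existence of a genuine two-cycle of \eqref{problem1} (for instance by a fixed-point argument for the second iterate on the invariant interval of Theorem \ref{boundedness1}, exploiting instability of the equilibrium when $\nu > p+1$), and then apply your symmetry step unchanged.
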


\begin{theorem}
Let $\nu$ be an even natural number. 
Then, there exists some natural number $N>q=p+1$ such that for every $\nu\geq N,$ equation \eqref{problem2} has a prime period two solution. 
The prime period two solution takes the form:
\[
\left\{\ldots, -\frac{q}{p},\; \frac{q}{-p+(q/p)^{\nu}},\; -\frac{q}{p},\; \frac{q}{-p+(q/p)^{\nu}}, \ldots\right\}.
\]
\end{theorem}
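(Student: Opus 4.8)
The plan is to follow the same template used for the prime period two results of \eqref{problem1} and for the odd-$\nu$ version of \eqref{problem2}, adapting it to the parity of $\nu$. First I would let $\ldots,\phi,\psi,\phi,\psi,\ldots$ be a prime period two solution of \eqref{problem2}, so that $\phi\neq\psi$ and
\[
-p\psi+\psi\phi^{\nu}=q,\qquad -p\phi+\phi\psi^{\nu}=q.
\]
Subtracting these two equations and dividing by $\psi-\phi\neq 0$ isolates $p$ as $p=-\phi\psi\,(\psi^{\nu-1}-\phi^{\nu-1})/(\psi-\phi)$, while multiplying the first equation by $\phi$, the second by $\psi$, and subtracting isolates $q$ as $q=\phi\psi\,(\psi^{\nu}-\phi^{\nu})/(\psi-\phi)$. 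Dividing the second relation by the first yields the characterizing identity
\[
\frac{\psi^{\nu}-\phi^{\nu}}{\psi^{\nu-1}-\phi^{\nu-1}}=-\frac{q}{p},
\]
which is exactly the analogue of \eqref{qp} but with a sign flip produced by the $-p$ in \eqref{problem2}. Every prime period two orbit must satisfy this equation, so the task reduces to exhibiting a genuine solution pair and reading off its form.

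For the form I would exploit that $\nu$ is even. Taking $\phi$ negative and close to $-q/p$ gives $\phi^{\nu}=(q/p)^{\nu}>0$, and since $q=p+1$ forces $q/p=1+1/p>1$, the quantity $(q/p)^{\nu}$ grows without bound. Hence $\psi=q/(-p+\phi^{\nu})$ is a small positive number as soon as $(q/p)^{\nu}>p$, and in fact $0<\psi<1$ once $(q/p)^{\nu}>p+q$. This is the origin of the threshold $N$: choosing $N>q=p+1$ large enough that $(q/p)^{\nu}>p+q$ for all $\nu\geq N$ guarantees $\phi<0<\psi<1$, so in particular $\phi\neq\psi$ and the orbit is automatically prime. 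With $0<\psi<1$ and $\nu$ large we have $\psi^{\nu}\to 0$; substituting $\psi^{\nu}\approx 0$ into the second defining equation returns $\phi=-q/p$, and then the first equation gives $\psi=q/(-p+(q/p)^{\nu})$, which is precisely the claimed pair.

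To upgrade this from a consistency check to an existence proof I would argue with the second iterate map $H\circ H$, where $H(y)=q/(-p+y^{\nu})$. Using the boundedness result that every negative solution of \eqref{problem2} is bounded, together with continuity of $H\circ H$ on the relevant invariant set, I would evaluate $g(y):=H(H(y))-y$ near $y=-q/p$ and exhibit a sign change, so that the intermediate value theorem produces an exact fixed point of $H\circ H$ lying within $O(\psi^{\nu})$ of the candidate $\phi=-q/p$; since this fixed point is negative while its $H$-image is positive, it cannot be a fixed point of $H$ and therefore yields a genuine prime period two orbit. The asymptotic bookkeeping of the previous paragraph then pins the two orbit values to the stated form up to the $\psi^{\nu}\to 0$ approximation already used throughout this section.

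The main obstacle is precisely this existence step for even $\nu$. In the odd-$\nu$ case both cycle values are negative and the argument runs parallel to the one for \eqref{problem1}; here the cycle straddles zero, so $H$ interchanges a negative value with a small positive one, and one must check both that the sign change of $H\circ H(y)-y$ genuinely occurs (which is where the lower bound $(q/p)^{\nu}>p+q$, hence the threshold $N$, is indispensable) and that the resulting $2$-cycle does not degenerate onto an equilibrium. The latter is free here, since $q=p+1>p-1$ rules out a negative equilibrium by Theorem \ref{equilibria}(iii), while $\phi<0<\psi$ makes $\phi\neq\psi$ automatic. Once the sign change is secured, identifying the form is the routine computation sketched above.
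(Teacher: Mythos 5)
The paper offers no proof of this theorem at all: it is one of the results in Subsection 4.2 that the authors cover with the remark that the results ``can be verified easily,'' implicitly leaning on the proof given for the analogous period-two theorem for equation \eqref{problem1}. Measured against that implicit template, your proposal is correct and in fact goes further than the paper does. Your algebra checks out: subtracting and cross-multiplying the cycle equations $-p\psi+\psi\phi^{\nu}=q$ and $-p\phi+\phi\psi^{\nu}=q$ indeed gives $p=-\phi\psi(\psi^{\nu-1}-\phi^{\nu-1})/(\psi-\phi)$ and $q=\phi\psi(\psi^{\nu}-\phi^{\nu})/(\psi-\phi)$, hence the sign-flipped analogue of \eqref{qp}; and your identification of the cycle values by sending $\psi^{\nu}\to 0$ is exactly the approximation the paper itself uses to close its proof for \eqref{problem1} (``$0<\psi^{\nu}\ll 1$''). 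The genuinely new ingredient is your existence argument via the second iterate, and it is sound and completable: writing $T=H\circ H$ and $\phi_0=-q/p$, fix a small $\delta>0$; for all even $\nu$ beyond a threshold $N$ one has, for every $y\in[\phi_0-\delta,\phi_0+\delta]$, that $H(y)\in(0,1)$ is tiny, so $T(y)=q/(-p+H(y)^{\nu})=-q/\left(p-H(y)^{\nu}\right)$ lies just below $\phi_0$; hence $T(\phi_0+\delta)-(\phi_0+\delta)<0$ while $T(\phi_0-\delta)-(\phi_0-\delta)=\delta-O\!\left(H(y)^{\nu}\right)>0$, and the intermediate value theorem yields a fixed point of $T$ that is negative with positive $H$-image, i.e., a genuine prime period-two orbit. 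This is strictly more than the paper establishes for any of its period-two claims. Two small caveats: your appeal to the boundedness theorem is a red herring, since the cycle straddles zero that theorem does not even apply, and your IVT argument is purely local and needs no invariant set; and, as you yourself note, the exact cycle cannot literally equal $\left\{-q/p,\; q/(-p+(q/p)^{\nu})\right\}$ (that would force $\psi^{\nu}=0$), so the stated ``form'' can only be read asymptotically --- a defect of the theorem statement (and of the paper's own parallel proof for \eqref{problem1}), not of your argument.
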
 
In what follows, we give some numerical examples to illustrate our previous results. 
Figure \ref{fig6} illustrate our results for the nonlinear difference equation \eqref{problem1} with $\nu>1$. 
Meanwhile, Figure \ref{fig7} shows the behavior of solutions for the nonlinear difference equation \eqref{problem2} with $\nu$ at least $2$. 
The values for $p,q, \nu$ and the initial condition $x_0$ are indicated in each of the given plot. 

\begin{figure}[h!]
   \centering
    \scalebox{0.65}{\includegraphics{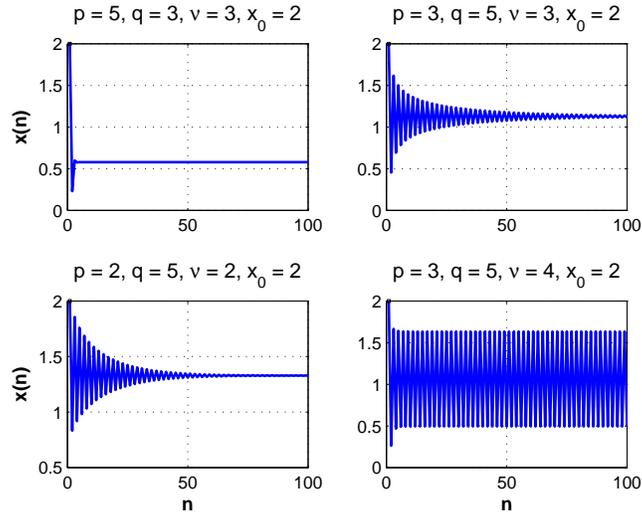}}
    \caption{The above plots illustrate the long term dynamics of the nonlinear difference equation $x_{n+1} = q/(p+x_n^{\nu})$ for some values of the parameter set $(p,q,\nu,x_0)$.}
    \label{fig6}
\end{figure}

\begin{figure}[h!]
   \centering
    \scalebox{0.65}{\includegraphics{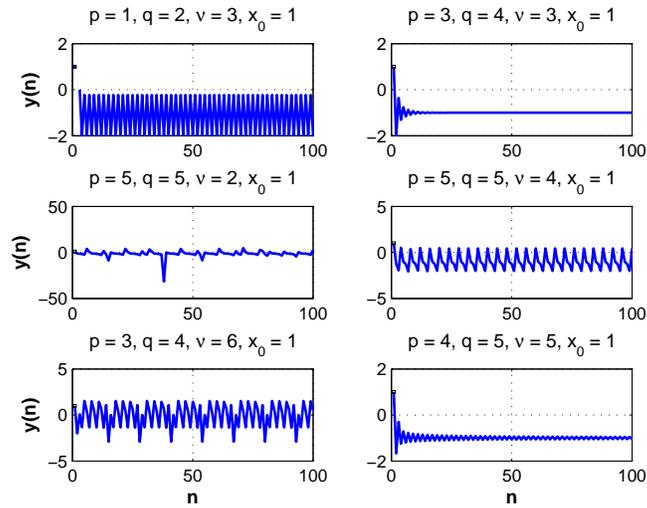}}
    \caption{The above plots illustrate the long term dynamics of the nonlinear difference equation $y_{n+1} = q/(-p+y_n^{\nu})$ for some values of the parameter set $(p,q,\nu,y_0)$.}
    \label{fig7}
\end{figure}

{\bf Authors' Note} After the first version of this paper has been drafted (March 23, 2014), 
we have learned that the solution form of the Riccati difference equation has been solved completely in 
[Representation of solutions of bilinear equations in terms of generalized Fibonacci sequences, {\it Electron. J. Qual. Theory Differ. Equ.}, {\bf 67} (2014) 1--15] by St\'{e}vic. 
However, as alluded in the introduction, the solution form of the Riccati difference equation was first obtained by Brand in [A sequence defined by a difference equation, {\it Am. Math. Mon.}, {\bf 62} (1955), 489--492] but this work was not mentioned by St\'{e}vic in his paper. Nevertheless, the results presented here, except possibly for the form of solution of the two nonlinear difference equations \eqref{problem1} and \eqref{problem2}, are new and are of different interest from those we have mentioned.

\section{Summary and Future Work}

In this work we have investigated the behavior of solutions of two special types of Riccati difference equation of the form $x_{n+1} = q/(\pm p + x_n)$.
It was shown that the solution of these equations are expressible in terms of the well-known Horadam sequence.
Two similar equations of the form $x_{n+1} = q/(\pm p + x_n^{\nu})$, where $\nu > 1$, were also examined.
Apparently, the stability of the equilibrium points of these equations behave differently according to some conditions imposed on the parameters $p$, $q$ and $\nu$.
As verified through numerical experiments, the difference equation $x_{n+1} = q/(\pm p + x_n^{\nu})$ may have a prime period two solution whenever the inequality condition $\nu \geq q = p+1$ is satisfied.  
In our next investigation, we shall study the dynamics of the coupled difference equation given by the system
\[
x_{n+1} = \frac{a}{b+ y_n^{\mu}}, \qquad y_{n+1} = \frac{\alpha}{\beta+ x_n^{\nu}}, \qquad n =0,1, \ldots, 
\]
where $a, b, \alpha$ and $\beta$ are real numbers and $x_0$ and $y_0$ are real positive values. 

%

\footnotesize


\noindent
email:\ journal@monotone.uwaterloo.ca\\
http://monotone.uwaterloo.ca/$\sim$journal/

\end{document}